\newcommand{\cone}{\mathcal{C}}
\newtheorem{theorem}{Theorem}[section]
\newtheorem{proposition}[theorem]{Proposition}
\newtheorem{lemma}[theorem]{Lemma}
\newtheorem{remark}[theorem]{Remark}
\newtheorem{algorithm}[theorem]{Algorithm}
\newtheorem{example}[theorem]{Example}
\newtheorem{corollary}[theorem]{Corollary}
\newcommand{\NN}{\mathbb{N}}
\newcommand{\RR}{\mathbb{R}}
\newcommand{\R}{\mathbb{R}}
\def\tT{{\mbox{\tiny{T}}}}
\def\argmin{\mathop{\rm argmin}}
\newcommand{\supp}{\text{supp}\,}
\newcommand{\fftn}{\text{\texttt{fft2}}}
\newcommand{\ifftn}{\text{\texttt{ifft2}}}
\title{Convex Multiclass Segmentation with Shearlet Regularization}
\date{\today}
\author{
S. H\"auser
and
G. Steidl\thanks{University Kaiserslautern, Dept. of Mathematics, Kaiserslautern, Germany}
}
\begin{document}

\maketitle

\begin{abstract}
\noindent
Segmentation plays an important role in many preprocessing stages in image processing.
Recently, convex relaxation methods for image multi-labeling were proposed
in the literature. Often these models involve the total variation (TV) semi-norm
as regularizing term. However, it is well-known that the TV functional
is not optimal for the segmentation of textured regions.
In recent years directional representation systems were proposed to cope with
curved singularities in images. In particular, curvelets and shearlets provide
an optimally sparse approximation in the class of piecewise smooth functions with
$C^2$ singularity boundaries.
In this paper, we demonstrate that
the discrete shearlet transform is suited as regularizer for the segmentation of
curved structures.
Neither the shearlet nor the curvelet transform where used as regularizer
in a segmentation model so far.
To this end, we have implemented a translation invariant
finite discrete shearlet transform based on the FFT.
We describe how the shearlet transform can be incorporated within
the multi-label segmentation model and show how to find
a minimizer of the corresponding functional by applying an alternating direction method
of multipliers. Here the Parseval frame property of our shearlets comes into play.
We demonstrate by numerical examples that the shearlet regularized model
can better segment curved textures than the TV regularized one and
that the method can also cope with regularizers obtained from non-local means.
\end{abstract}
\section{Introduction} \label{sec_intro}
In recent years, much effort has been spent to design
directional representation systems for images
such as curvelets \cite{CDDY06}, ridgelets \cite{CD99}
and  shearlets  \cite{KGL06} and corresponding transforms (this list is not complete).
Among these transforms, the shearlet transform stands out since it stems
from a square-integrable group representation
\cite{DKMSST08} and has  the corresponding useful mathematical properties.
Moreover, similarly as wavelets are related to Besov spaces via atomic decompositions,
shearlets correspond to certain function spaces, the so-called shearlet coorbit spaces
\cite{DKST09}. In addition shearlets provide
an optimally sparse approximation in the class of piecewise smooth functions with $C^2$ singularity curves,
i.e.,
$$
\|f - f_N\|_{L_2}^2 \le C  N^{-2} (\log N)^3 \quad {\rm as} \; N \rightarrow \infty,
$$
where $f_N$ is the nonlinear shearlet approximation of a function $f$
from this class obtained by taking the $N$ largest shearlet coefficients in absolute value.

In the following, we want to show how the directional information encoded by the shearlet transform
can be used in image segmentation.
Although shearlets and curvelets have been applied to a wide field of image processing tasks, see, e.g., \cite{GL09,KL10,ELL08,MP09_a,MP09,YLEK09},
to the best of our knowledge they were not used in connection with segmentation
so far.
Segmentation is a fundamental task in image processing which plays
a role in many preprocessing steps.
Recently, convex relaxation methods for image multi-labeling were addressed by several authors
\cite{BYT09,EG11,JKS07,KSY11,LBS09,LKYBS09,LS10,PCCB09,SKC10,ZGFN08}. In
 these methods a convex functional consisting of a data term
and a regularization term is minimized.
Due to its edge-preserving properties a frequent choice for the regularization term is the
total variation (TV) semi-norm introduced by Rudin, Osher and Fatemi \cite{ROF92}.
The data term is either related to a predefined codebook (of gray/RGB-values related to the labels)
or the codebook is updated during the minimization process itself. For the later approach see, e.g., \cite{BYT11,CJPT10,HHMSS11}.
In this paper, we focus on the first approach, but replace the TV regularizer by the shearlet transform.
To this end, we introduce a simple discrete shearlet transform which translates the shearlets over the full grid
at each scale and for each direction. Using the FFT this transform can be still realized in a fast way.
We show by numerical examples that the shearlet regularized model is
well-suited to segment curved textures and can even cope with models incorporating nonlocal
means.
\begin{figure}[htbp]
    \centering
    \subfigure[Forms with different edge orientations]{\label{fig:formen} \includegraphics[width=0.3\textwidth]{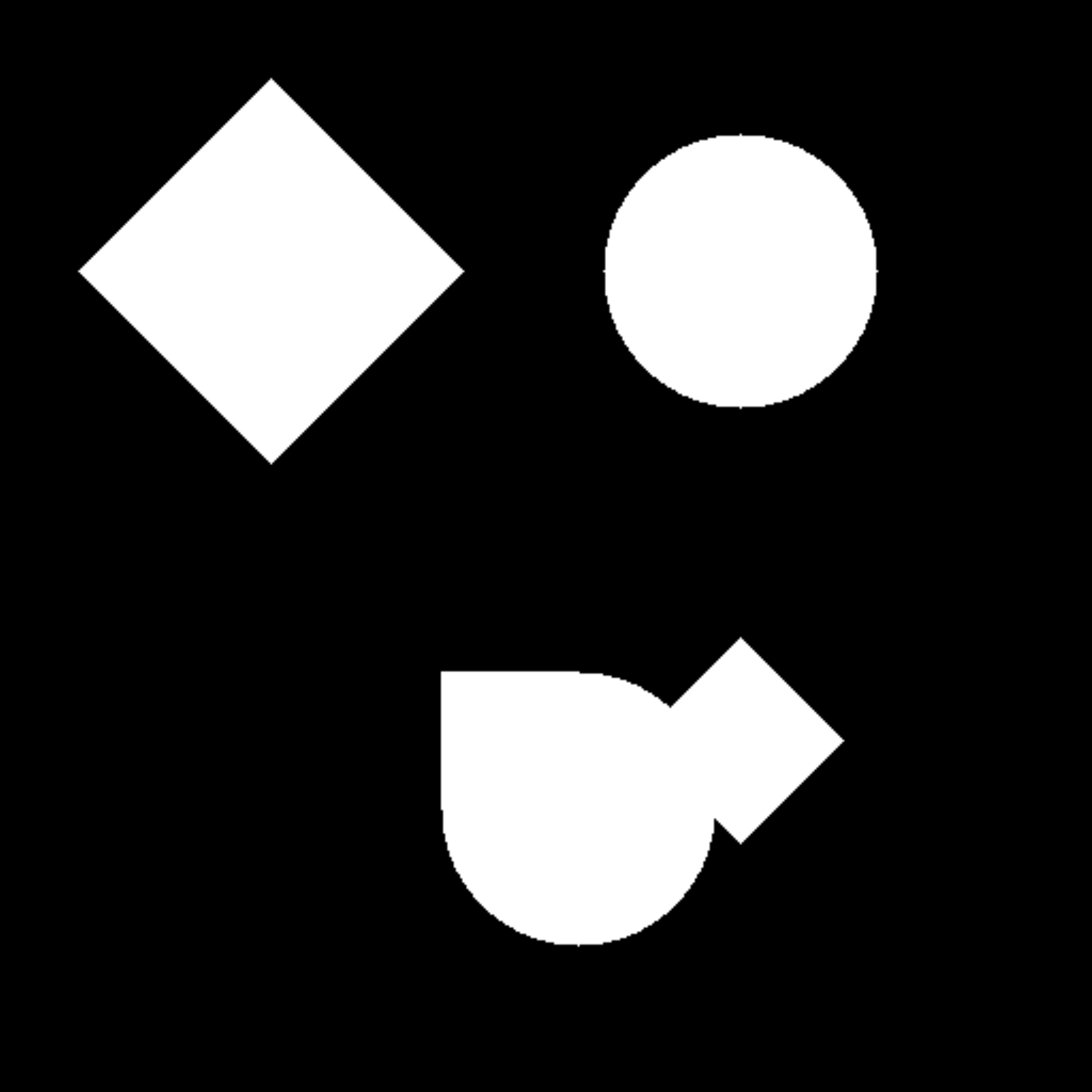}}
\hspace{0.2cm}
    \subfigure[Shearlet coefficients \newline for $a=\frac{1}{64}$ and $s = -1$]{\label{fig:formenShearlet} \includegraphics[width=0.3\textwidth]{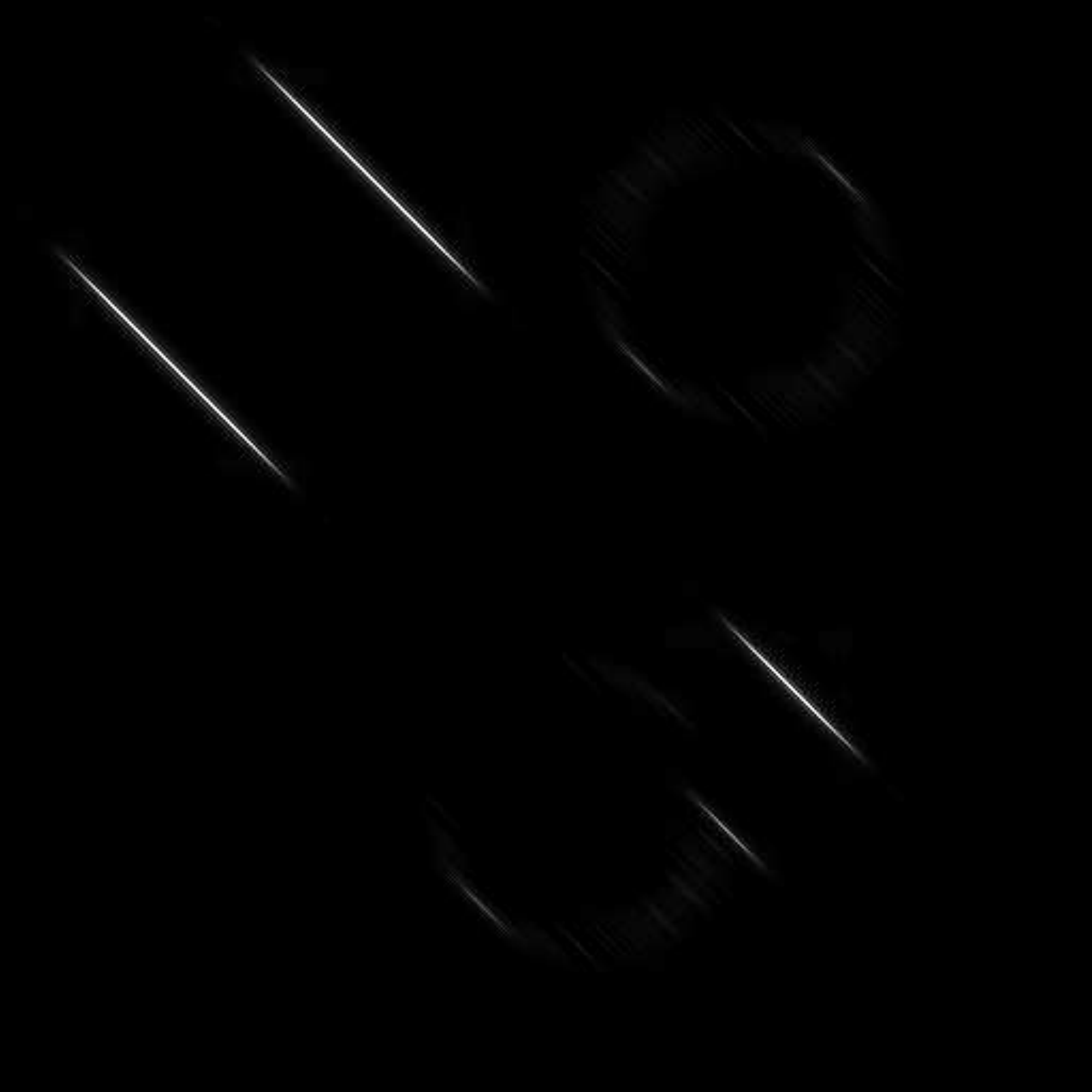}}
\hspace{0.2cm}
    \subfigure[Sum of shearlet coefficients for $a=\frac{1}{64}$ for all $s$]{\label{fig:formenShearletSumme}\includegraphics[width=0.3\textwidth]{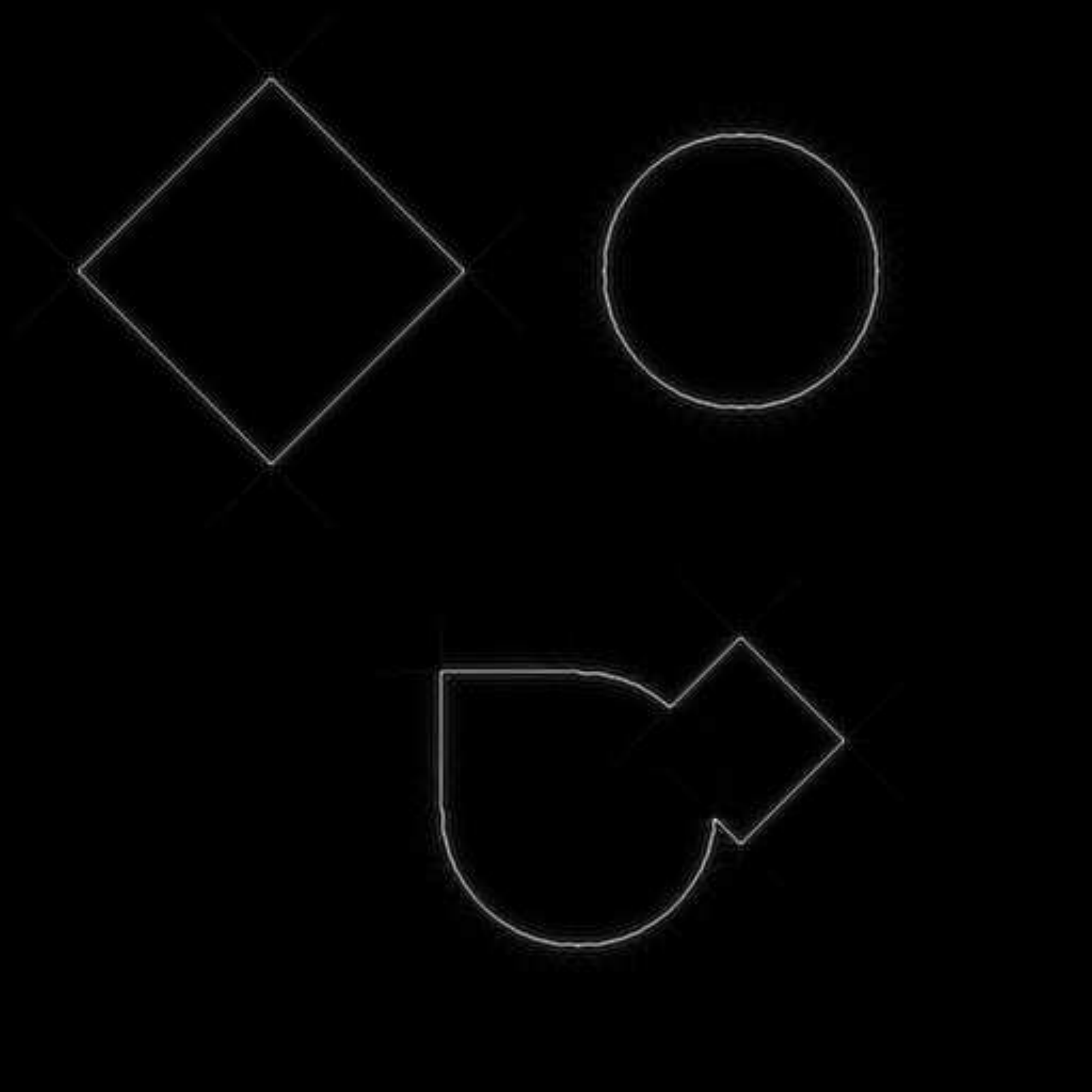}}
    \caption{Shearlet coefficients can detect edges with different orientations.}
    \label{fig:edgeDetection}
\end{figure}

This paper is organized as follows:
In Section \ref{sec_dst} we introduce the translation invariant, finite, discrete shearlet transform
whose full description cannot be found in the literature so far.
Here we follow the path via the continuous shearlet transform,
its counterpart on cones and finally its discretization on the full grid.
This is different to other implementations as, e.g., in \emph{ShearLab}\footnote{http://www.shearlab.org},
see \cite{SKZ11}.
Our discrete shearlet transform can be efficiently computed by the fast Fourier transform (FFT).
The discrete shearlets constitute a Parseval frame of the finite Euclidean space such that
the inversion of the shearlet transform  can be simply done by applying the adjoint transform.
This is proven in Theorem \ref{theorem:discreteShearletFrame} and cannot be found
in the literature so far.
In Section \ref{sec_segm_shearlets} we recall a convex multi-label segmentation model
and show how the shearlet transform can be incorporated into this approach.
We propose the method of alternating direction of multipliers (ADMM)
\cite{BPCPE11,EB92,EZC10,Gab83}
to find a minimizer of the functional
where the Parseval frame property of our shearlet frame can be exploited.
Numerical results are presented in Section \ref{sec:num_results}, in particular in comparison with models involving
TV and nonlocal means (NL) regularizers \cite{BCM05,GO07a,GO07,ST10}.

\section{Finite Discrete Shearlet Transform} \label{sec_dst}
We start with a brief introduction of the finite discrete shearlet transform to make the paper self-contained.
First we describe the continuous shearlet transform in $\R^2$ and on cones.
Then we provide a discretization of the shearlet transform on a finite domain
which is translation invariant and cannot be found in this way in the literature.
We prove that the discrete shearlets constitute a Parseval frame of the finite Euclidean space so that
the inversion of the shearlet transform  can be simply done by applying its adjoint transform.
For a more detailed description we refer to \cite{Hae11}.

\paragraph{Continuous Shearlets.}
Let the
\emph{parabolic scaling matrix} $A_a$
and the \emph{shear matrix} $S_s$
be defined by
\begin{equation*}
  A_a
  =
  \left(\begin{array}{cc} a & 0 \\ 0 & \sqrt{a}\end{array}\right),
  \ a\in\RR^+,
  \quad
  S_s
  =
  \left(\begin{array}{cc} 1 & s \\ 0 & 1\end{array}\right),
  \ s\in\RR.
\end{equation*}
Then the {\it continuous shearlets} $\psi_{a,s,t}$ emerge by dilation, shearing and translation of a function
$\psi \in L_2(\R)$ as
\begin{equation}\label{eq:psi_ast}
  \psi_{a,s,t}(x)
:=
  a^{-\frac{3}{4}}\psi(A_a^{-1}S_s^{-1}(x-t)),
\end{equation}
see \cite{KGL06,LWWW02}.
Using the {\it Fourier transform} $\mathcal{F}: L_2(\R^2) \rightarrow L_2(\R^2)$
given by
$$
  \mathcal{F}f(\omega)
=
  \hat{f}(\omega)
:=
  \int_{\R^2} f(t)e^{-2\pi i\langle\omega,t\rangle} dt,
$$
we obtain that
\begin{equation*}
  \hat{\psi}_{a,s,t}(\omega)
=
  a^{\frac{3}{4}} e^{-2\pi i\langle \omega,t \rangle} \hat{\psi}\left(a\omega_1,\sqrt{a}(s\omega_1+\omega_2)\right).
\end{equation*}
The \emph{continuous shearlet transform} $\mathcal{SH}_\psi (f)$ of a function $f\in L_2(\RR)$ is defined by
\begin{equation*}
  \mathcal{SH}_\psi (f)(a,s,t)
:=
  \langle f,\psi_{a,s,t}\rangle
  \; = \;
  \langle \hat{f},\hat{\psi}_{a,s,t} \rangle.
\end{equation*}
The shearlet transform is invertible if the function $\psi$ fulfills the admissibility property
$$\int_{\R^2} \frac{|\hat \psi(\omega_1,\omega_2)|^2}{|\omega_1|^2} \, d \omega_1 d\omega_2 < \infty.$$
This is in particular the case if
$$
\hat \psi(\omega) = \hat \psi_1(\omega_1) \hat \psi_2\left( \frac{\omega_2}{\omega_1} \right),
$$
where $\psi_1\colon\RR\to\RR$ is a wavelet and $\psi_2\colon\RR\to\RR$ a bump function.
Typical choices for these functions are
\begin{equation} \label{psi1_psi2}
  \hat{\psi}_1(\omega_1) := \sqrt{b^2(2\omega_1) + b^2(\omega_1)}
\quad {\rm and} \quad
\hat{\psi}_2(\omega_2) :=
\begin{cases}
  \sqrt{v(1+\omega_2)}&\text{for }\omega_2 \leq 0, \\
  \sqrt{v(1-\omega_2)}&\text{for }\omega_2 >0,
\end{cases}
\end{equation}
where
\begin{equation*}
 v(x) :=
\begin{cases}
  0&\text{for } x< 0, \\
  35x^4-84x^5+70x^6-20x^7&\text{for } 0\leq x \leq 1,\\
  1&\text{for } x>1
\end{cases}
\end{equation*}
and
\begin{equation*}
  b(x)
:=
  \begin{cases}
  \sin(\frac{\pi}{2}v(|x|-1))&\text{for }1\leq|x|\leq 2, \\
  \cos(\frac{\pi}{2}v(\frac{1}{2}|x|-1))&\text{for }2 < |x|\leq 4, \\
  0&\text{otherwise}
  \end{cases}
\end{equation*}
were introduced by Y. Meyer \cite{Me01,Ma08}.
The functions $\hat \psi_1$ and $\hat \psi_2$ are shown in Fig. \ref{fig:1}.
It is well-known that they have the following useful properties.
\begin{lemma} \label{lem:prop_psi1_psi2}
The functions ${\psi}_1$ and ${\psi}_2$ defined by \eqref{psi1_psi2} fulfill
\begin{align*}
  \sum_{j\geq 0}
  |\hat{\psi}_1(2^{-2j}\omega_1)|^2
&= 1\quad\text{for }|\omega_1|>1,\\
\sum_{k=-2^j}^{2^j}|\hat{\psi}_2(k+2^j\omega_2)|^2
&= 1\quad\text{for }|\omega_2|\leq 1,\ j\geq 0.
\end{align*}
\end{lemma}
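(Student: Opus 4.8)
The plan is to reduce both identities to one–dimensional "partition of unity" statements and a finite truncation argument. The algebraic engine is the single elementary relation $v(t)+v(1-t)=1$ for $t\in[0,1]$ (immediate from the degree--$7$ polynomial defining $v$, and in any case a classical property of the Meyer bump, cf. \cite{Me01,Ma08}) together with $\sin^2+\cos^2=1$.

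For the first identity I would start by squaring the definition of $\hat\psi_1$, which gives $|\hat\psi_1(\omega_1)|^2=b^2(2\omega_1)+b^2(\omega_1)$, hence $|\hat\psi_1(2^{-2j}\omega_1)|^2=b^2(2^{\,1-2j}\omega_1)+b^2(2^{-2j}\omega_1)$. The key observation is that, as $j$ ranges over $\{0,1,2,\dots\}$, the exponents $1-2j$ and $-2j$ run through every integer $m\le 1$ exactly once, so $\sum_{j\ge 0}|\hat\psi_1(2^{-2j}\omega_1)|^2=\sum_{m\le 1}b^2(2^m\omega_1)$. Since $b$ is supported on $1\le|x|\le 4$, for $|\omega_1|>1$ one has $2^m|\omega_1|>4$ whenever $m\ge 2$, so the truncated sum coincides with the full periodization $G(\omega_1):=\sum_{m\in\ZZ}b^2(2^m\omega_1)$. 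Finally $G\equiv 1$ on $\RR\setminus\{0\}$: $G(2x)=G(x)$, and for $x\in[1,2]$ only the terms $m=0,1$ survive, where $b^2(x)+b^2(2x)=\sin^2\!\big(\tfrac{\pi}{2}v(x-1)\big)+\cos^2\!\big(\tfrac{\pi}{2}v(x-1)\big)=1$; the scaling relation and the evenness of $b$ then propagate this to all $x\ne 0$.

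For the second identity I would first combine the two cases in the definition of $\hat\psi_2$ into $|\hat\psi_2(\omega_2)|^2=v(1-|\omega_2|)=:\Phi(\omega_2)$, a function supported on $[-1,1]$. Setting $y:=2^j\omega_2$, the claim becomes $\sum_{k=-2^j}^{2^j}\Phi(y+k)=1$ for $|y|\le 2^j$. For $|k|\ge 2^j+1$ and $|y|\le 2^j$ one gets $|y+k|\ge 1$, so $\Phi(y+k)=0$, and the finite sum equals the full periodization $\sum_{k\in\ZZ}\Phi(y+k)$. This periodization equals $1$: it is $1$–periodic in $y$, and for $y\in[0,1)$ only $k=0,-1$ contribute, with $\Phi(y)+\Phi(y-1)=v(1-y)+v(y)=1$.

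The main obstacle I expect is purely the bookkeeping: checking that the pair $(1-2j,\,-2j)$ tiles the integers $\le 1$ without gaps or overlaps, identifying precisely which indices $m$ (resp.\ $k$) survive once the hypotheses $|\omega_1|>1$ (resp.\ $|\omega_2|\le 1$) are invoked, and dispatching the boundary cases $x\in\{1,2\}$ and $y+k\in\{\pm 1\}$ by a short direct inspection. The conceptual content — that each identity is a periodization of a smooth bump built so that two adjacent pieces add up to $1$ — is otherwise routine.
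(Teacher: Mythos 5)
Your argument is correct and complete. Note that the paper itself gives no proof of this lemma: it states the two identities as ``well-known'' properties of Meyer's functions and defers to \cite{Me01,Ma08}, so there is no internal proof to diverge from. Your periodization argument is exactly the standard one that those references rely on: reduce each identity to the full periodization of a bump ($\sum_{m\in\ZZ}b^2(2^m\omega_1)$, resp.\ $\sum_{k\in\ZZ}v(1-|\omega_2+k|)$), observe that the hypotheses $|\omega_1|>1$, $|\omega_2|\le 1$ make the omitted tail terms vanish because of the supports ${\rm supp}\,b\subseteq\{1\le|x|\le4\}$ and ${\rm supp}\,v(1-|\cdot|)\subseteq[-1,1]$, and then evaluate the periodization on one fundamental domain using $\sin^2+\cos^2=1$ (via $b(2x)=\cos(\tfrac{\pi}{2}v(x-1))$ for $x\in[1,2]$), resp.\ $v(t)+v(1-t)=1$. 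Your index bookkeeping ($\{1-2j,-2j\}_{j\ge0}$ tiling the integers $\le 1$, only $k=0,-1$ contributing for $y\in[0,1)$) and the boundary cases check out; the only step you assert rather than verify is $v(t)+v(1-t)=1$ on $[0,1]$, which is a one-line polynomial identity for the given degree-$7$ polynomial and is indeed the defining property of Meyer's auxiliary function. As a side benefit, your computation for the first identity restricted to $\tfrac12<|\omega|<1$ also yields the refined formula the paper uses later to derive \eqref{eq:OverlapPsi1_2}.
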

%
\begin{figure}[htbp]
	\centering
	\includegraphics[width=0.4\textwidth]{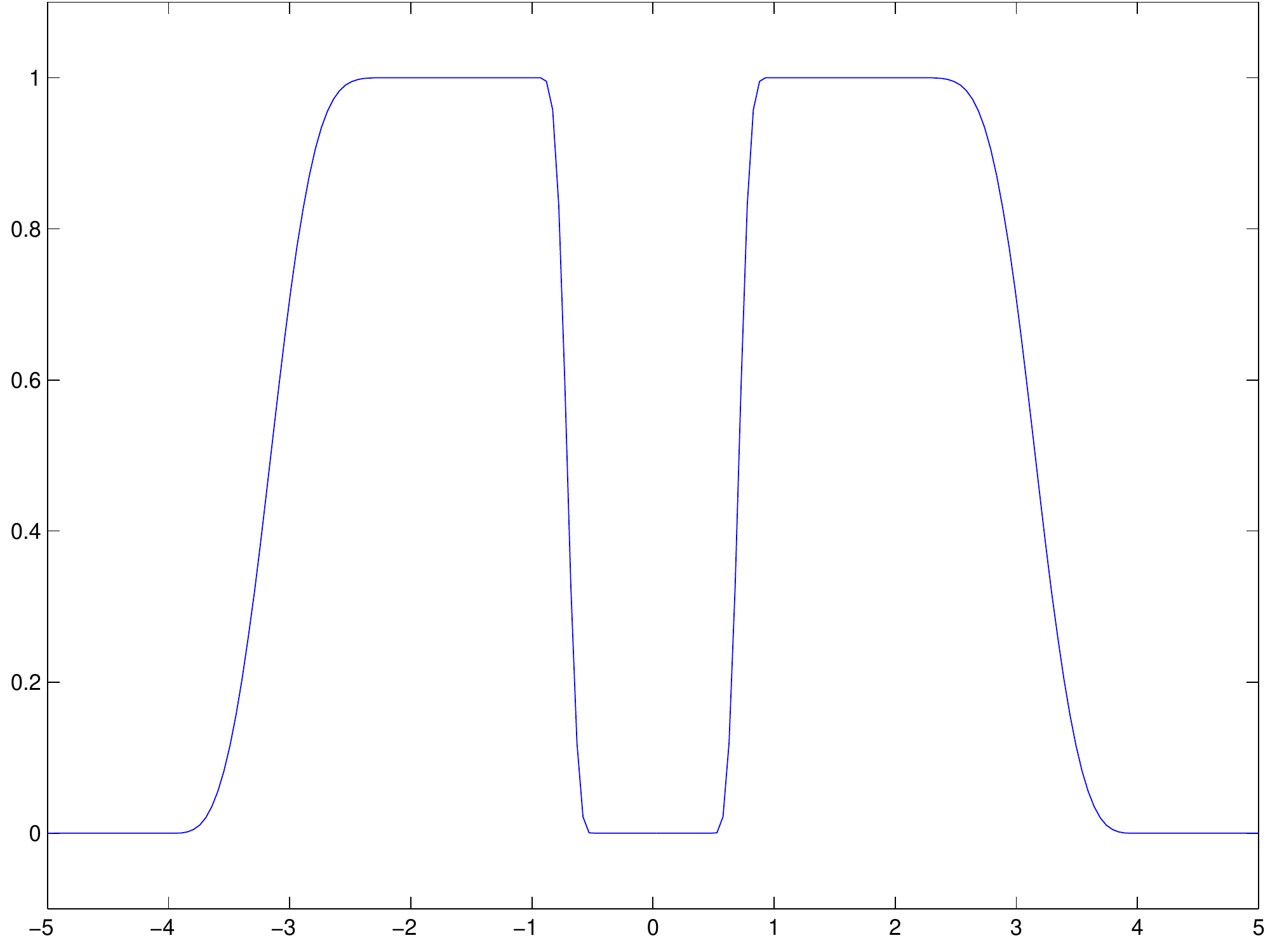}\hspace{1cm}
	\includegraphics[width=0.4\textwidth]{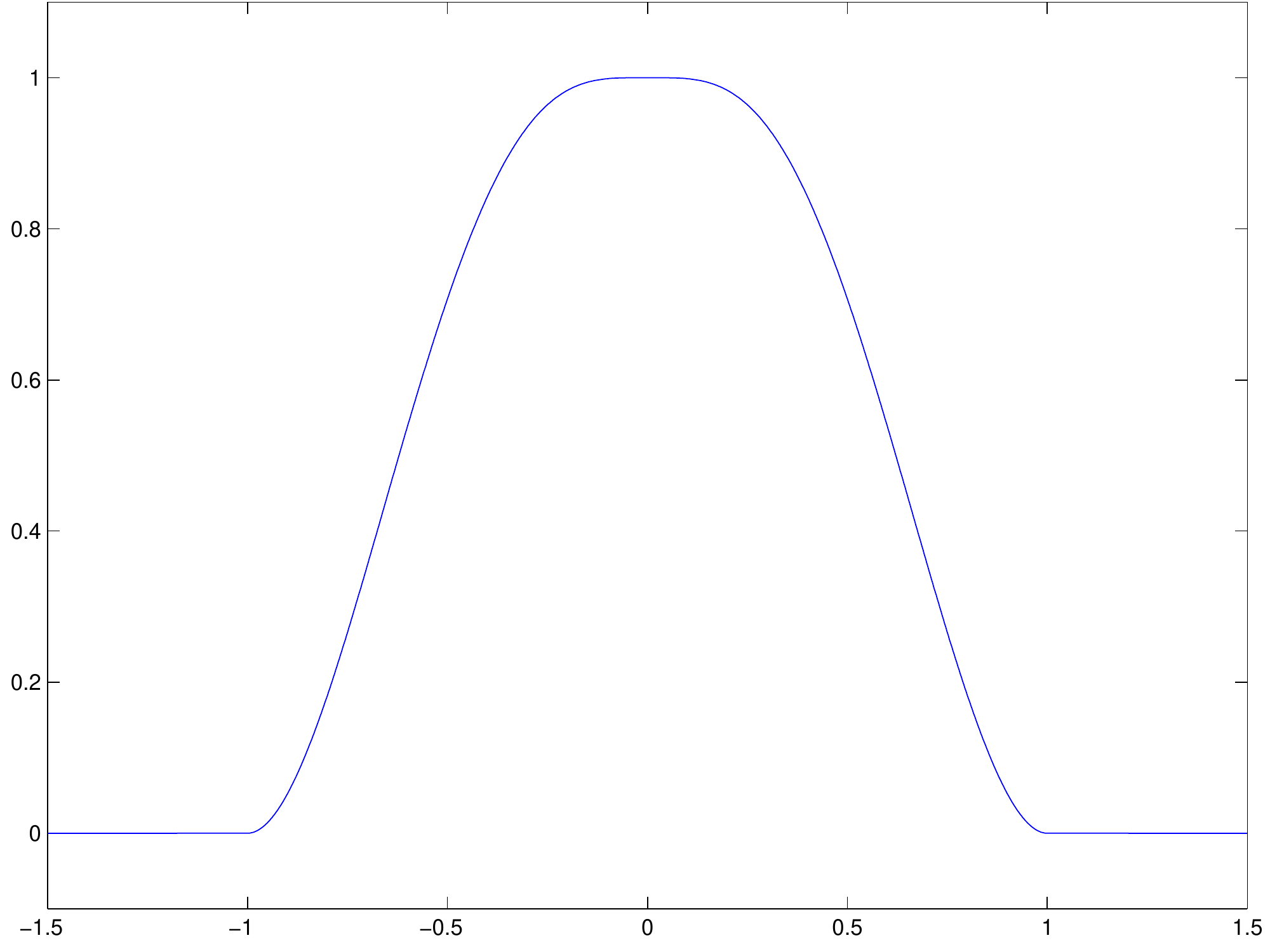}
	\caption{\label{fig:1} The functions $\hat \psi_1$ with ${\rm supp} \, \hat \psi_1 = [-4,-\frac12] \cup  [\frac12,4]$ (left)
and \newline $\hat \psi_2$ with ${\rm supp} \, \hat \psi_2 = [-1,1]$ (right).}
\end{figure}
\paragraph{Shearlets on Cones.}
To get a good directional selectivity both from the horizontal and vertical point of view
shearlets on the cone were introduced in \cite{KGL06}.
We define the \emph{horizontal} and the \emph{vertical cone} by
\begin{align*}
		\mathcal{C}^h
	&:=
		\{
			(\omega_1,\omega_2)\in\RR^2 : |\omega_1| \geq \frac{1}{2}, |\omega_2|<|\omega_1|
		\},\\
\mathcal{C}^v
	&:=
		\{
			(\omega_1,\omega_2)\in\RR^2 : |\omega_2| \geq \frac{1}{2}, |\omega_2|>|\omega_1|
		\},
\end{align*}
resp., and  the ``intersection'' (seam lines) of the two cones and the ``low frequency'' set by
\begin{align*}
			\mathcal{C}^\times
	&:=
		\{
			(\omega_1,\omega_2)\in\RR^2 : |\omega_1| \geq \frac{1}{2}, |\omega_2| \geq \frac{1}{2}, |\omega_1|=|\omega_2|
		\},
	\\
			\mathcal{C}^0
	&:=
		\{
			(\omega_1,\omega_2)\in\RR^2 : |\omega_1| < 1, |\omega_2| < 1
		\},
\end{align*}
resp., see Fig. \ref{fig:cones}. Altogether
$
 \RR^2
=
 \mathcal{C}^h \cup
 \mathcal{C}^v \cup
 \mathcal{C}^\times \cup
 \mathcal{C}^0
$
with an overlapping domain
$\cone^\square:=(-1,1)^2\setminus(-\frac{1}{2},\frac{1}{2})^2$.
\begin{figure}[htbp]
	\centering
	\includegraphics[width=0.45\textwidth]{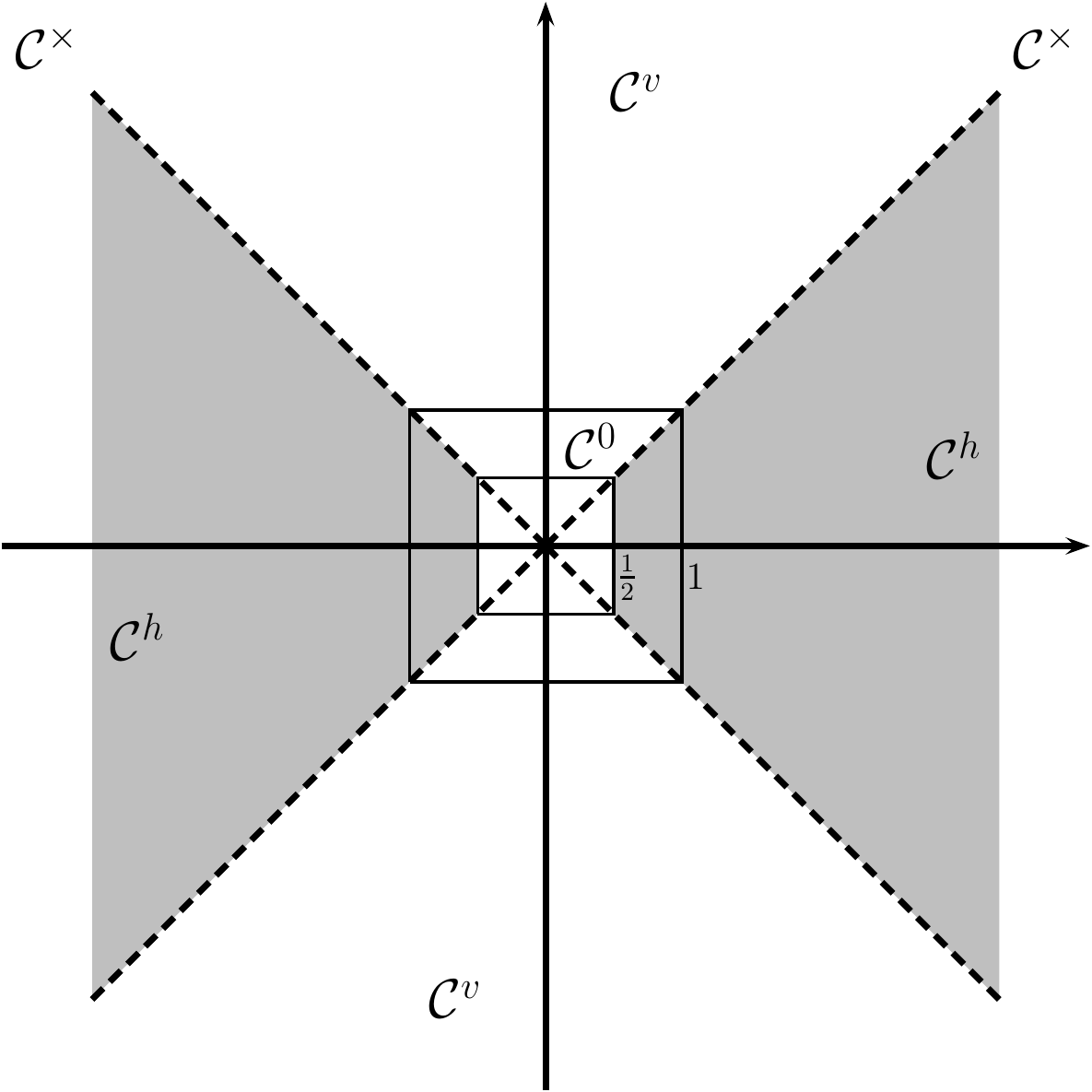}
	\caption{\label{fig:cones} The sets $\mathcal{C}^h$, $\mathcal{C}^v$, $\mathcal{C}^\times$ and $\mathcal{C}^0$.}
	\end{figure}
For each set $\cone^\kappa$, $\kappa \in \{h,v,\times\}$ we define a characteristic function $\chi_{\cone^\kappa}$
which is equal to 1 for $\omega\in\cone^\kappa$ and 0 otherwise.
The {\it shearlets on the cone} are defined by
\begin{equation*}
	\hat{\psi}^h(\omega_1,\omega_2)
:=
	\hat{\psi}_1(\omega_1)
	\hat{\psi}_2\left(\frac{\omega_2}{\omega_1}\right)
	\chi_{\cone^h}
\quad {\rm and} \quad
	\hat{\psi}^v(\omega_1,\omega_2)
:=
	\hat{\psi}_1(\omega_2)
	\hat{\psi}_2\left(\frac{\omega_1}{\omega_2}\right)
	\chi_{\cone^v}.
\end{equation*}
Fig. \ref{fig:shearletFourierTime} shows shearlets both in the time and frequency domain.
Note that for $|s| \leq 1 - \sqrt{a}$ we have $\supp\hat{\psi}_{a,s,t}\subseteq\cone^h$ and for $1-\sqrt{a}<|s|<1+\sqrt{a}$
parts of $\supp\hat{\psi}_{a,s,t}$ are also in $\cone^v$
which are cut off. For $|s|>1+\sqrt{a}$ the whole shearlets are set to zero by the characteristic function.
For $(\omega_1,\omega_2)\in\cone^\times$, i.e. $|\omega_1|=|\omega_2|$, we define
$
	\hat{\psi}^\times(\omega_1,\omega_2)
	:=
	\hat{\psi}(\omega_1,\omega_2)\chi_{\cone^\times}.
$
Finally, we use the scaling function $\phi$ defined by
\begin{align*}
  \hat{\phi}(\omega_1,\omega_2)
&:=
  \begin{cases}
    \varphi(\omega_1) &\text{for } |\omega_1| < 1, |\omega_2|\leq|\omega_1|, \\
    \varphi(\omega_2) &\text{for } |\omega_2| < 1, |\omega_1|<|\omega_2| \\
  \end{cases}
\end{align*}
with
\begin{equation*} \label{triangle}
\varphi(\omega) :=
\begin{cases}
  1 &\text{for } |\omega|\leq\frac{1}{2}, \\
  \cos(\frac{\pi}{2}v(2|\omega|-1))&\text{for }\frac{1}{2}<|\omega|<1, \\
  0 &\text{otherwise}
\end{cases}
\end{equation*}
and its translates $\phi_{t}(x) = \phi(x-t)$, i.e.,
$\hat{\phi_t}(\omega) = e^{-2\pi i \langle t, \omega \rangle} \hat{\phi}(\omega)$ on ${\cal C}^0$.
Note that this  scaling function $\hat{\phi}$ (respectively $\varphi$)
matches perfectly with $\hat{\psi}_1$, more precisely, since
\begin{equation*}
 \sum_{j\geq 0}
  |\hat{\psi}_1(2^{-2j}\omega)|^2
=
\begin{cases}
 0                                                 &\text{for }|\omega|\leq\frac{1}{2}, \\
 \sin^2\left(\frac{\pi}{2}v(2\omega-1)\right) &\text{for }\frac{1}{2}<|\omega|<1,  \\
 1                                                 &\text{for }|\omega|\geq 1
\end{cases}
\end{equation*}
we obtain
\begin{equation}\label{eq:OverlapPsi1_2}
  |\hat{\psi}_1(\omega)|^2+|\varphi(\omega)|^2
=
   1 \quad {\rm for} \; |\omega|\in \left[\tfrac{1}{2},1\right].
\end{equation}
\paragraph{Finite Discrete Shearlets.}
In the following, we consider digital images as functions sampled on the grid
$\frac{1}{N}{\cal I}:= \frac{1}{N} \{(m_1,m_2): \; m_i = 0,\ldots,N-1, \, i=1,2 \}$,
where we assume quadratic images for simplicity.
Further, let $j_0:=\lfloor \frac{1}{2}\log_2 N\rfloor$ be the number of considered scales.
To obtain a discrete shearlet transform, we
discretize the scaling, shear and translation parameters as
\begin{align*}
  a_j
&:=
  2^{-2j} = \frac{1}{4^j},\quad j= 0,\ldots,j_0-1,
  \\
  s_{j,k}
&:=
  k2^{-j},\quad -2^j\leq k\leq 2^j,
  \\
  t_m
&:=
  \frac{m}{N},\quad m \in {\cal I}.
\end{align*}
With these notations our shearlets  becomes
$
  \psi_{j,k,m}(x)
:=
  \psi_{a_j,s_{j,k},t_{m}} (x)
=
  \psi(A_{a_j}^{-1}S_{s_{j,k}}^{-1}(x-t_{m}))
$.
Observe that compared to the continuous shearlets defined in (\ref{eq:psi_ast}) we omit the factor $a^{-\frac{3}{4}}$.
In the Fourier domain we obtain
\begin{equation*}
 \hat{\psi}_{j,k,m}(\omega)
=
  \hat{\psi} (A_{a_j}^\tT S_{s_{j,k}}^\tT\omega)
  e^{-2\pi i \langle \omega, t_m \rangle}
  =
  \hat{\psi}_1
  \left(
	4^{-j}\omega_1
  \right)
  \hat{\psi}_2
  \left(
	2^j\frac{\omega_2}{\omega_1} + k
  \right)
  e^{-2\pi i \langle \omega, m \rangle/N}, \quad \omega \in \Omega,
\end{equation*}
where
$\Omega
  :=\left\{ (\omega_1,\omega_2):\, \omega_i =
	  -\left\lfloor\frac{N}{2}\right\rfloor, \ldots , \left\lceil\frac{N}{2}\right\rceil -1, \, i=1,2
	\right\}.
$
\begin{figure}[htbp]
    \centering
    \subfigure[Shearlet in Fourier domain \newline for $a=\frac{1}{4}$ and $s=-\frac{1}{2}$ ]{\label{fig:shearletFourier}
\includegraphics[width=0.4\textwidth]{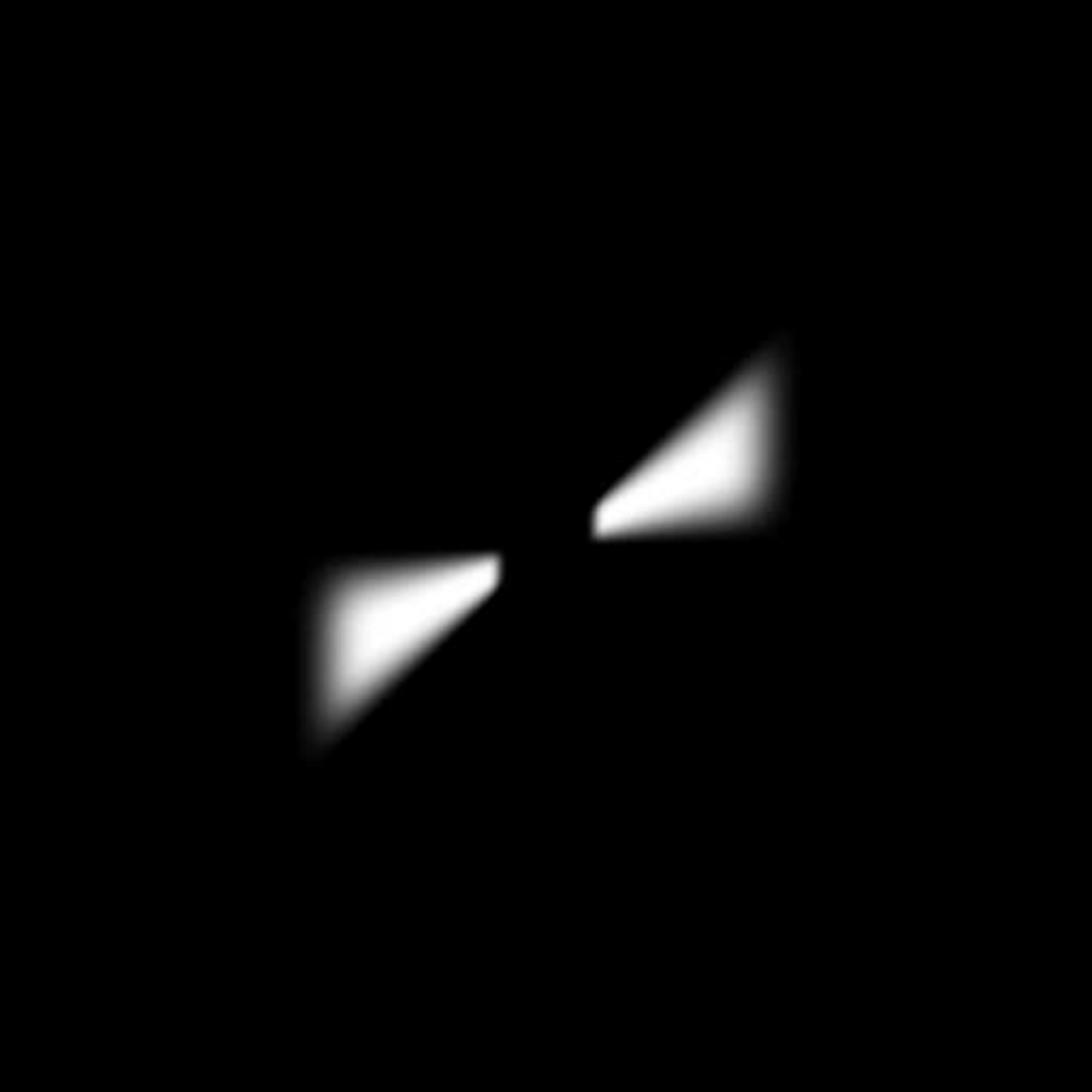}}
\hspace{1cm}
    \subfigure[Same shearlet in time domain (zoomed)]{\label{fig:shearletTime}
\includegraphics[width=0.4\textwidth]{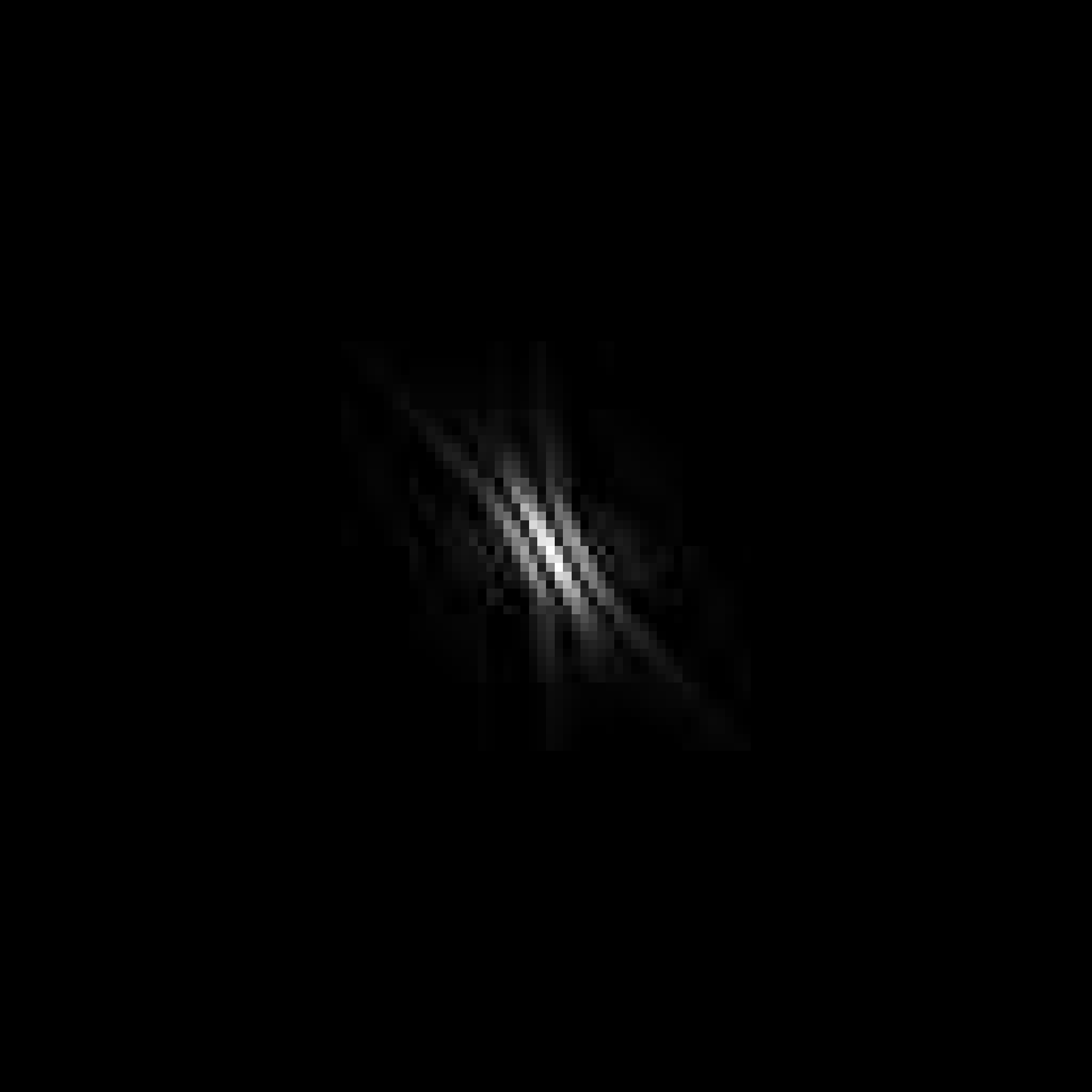}}
    \caption{Shearlet in Fourier and time domain.}
    \label{fig:shearletFourierTime}
\end{figure}
By definition we have $a\leq 1$ and $|s|\leq 1$.
Therefore we see that we have a cut off due to the cone boundaries only for $|k| = 2^j$ where $|s| = 1$.
For both cones we have for $|s|=1$ two ``half'' shearlets with a gap at the seam line.
None of the shearlets are defined on the seam line
$\cone^\times$. To obtain ``full'' shearlets at the seam lines we ``glue'' the three parts together,
thus, we define for $|k| = 2^j$ a sum of shearlets
$$
  \hat{\psi}^{h\times v}_{j,k,m}
:=
  \hat{\psi}^h_{j,k,m}
  +
  \hat{\psi}^v_{j,k,m}
  +
  \hat{\psi}^\times_{j,k,m}.
$$
We define the \emph{discrete shearlet transform} as
\begin{equation*}\label{eq:fullShearletTransform}
  \mathcal{SH}(f)(\kappa,j,k,m)
:=
  \begin{cases}
    \langle f,\phi_m \rangle 		&\text{for }\kappa = 0,  \\
    \langle f,\psi^\kappa_{j,k,m} \rangle 	&\text{for }\kappa \in \{h,v\},  \\
    \langle f,\psi^{h\times v}_{j,k,m} \rangle 	&\text{for }\kappa = \times, |k| =  2^j.
  \end{cases}
\end{equation*}
where $j= 0,\ldots,j_0-1$, $-2^j+1\leq k \leq 2^j-1$ and $m\in {\cal I}$ if not stated in another way.
The shearlet transform can be efficiently realized by applying the \fftn\ and its inverse \ifftn\
which compute the following discrete Fourier transforms
with ${\cal O}(N^2 \log N)$ arithmetic operations:
\begin{align*}
  \hat{f} (\omega)
&=
  \sum_{m \in {\cal I}}  f(m)   e^{-2\pi i \langle \omega, m \rangle/N}, \quad \omega \in \Omega,
  \\
  f(m)
&=
  \frac{1}{N^2}
  \sum_{\omega \in \Omega}    \hat f(\omega)
  e^{2\pi i \langle \omega, m \rangle/N}, \quad m \in {\cal I}.
\end{align*}
We have the  Plancherel formula
$$
 \langle f, g \rangle = \frac{1}{N^2} \langle \hat f, \hat g \rangle.
$$
Thus, the discrete shearlet transform can be computed for $\kappa=h$ as follows:
\begin{align}
  \mathcal{SH}(f)(h,j,k,m) \label{eq:wichtig}
&=
  \langle f, \psi_{j,k,m}^h \rangle
=
  \frac{1}{N^2}
  \langle \hat{f}, \hat{\psi}_{j,k,m}^h \rangle
  \\
&=
  \frac{1}{N^2}
  \sum_{\omega\in\Omega}
    \underbrace{\hat{\psi}
  (4^{-j}\omega_1,4^{-j}k\omega_1+2^{-j}\omega_2)
  \hat{f}(\omega_1,\omega_2)}_{\hat{g}_{j,k}(\omega)}\, e^{2\pi i\langle \omega,m \rangle/N} \nonumber
\\
&=\ifftn(\hat{g}_{j,k})\nonumber
\end{align}
and with the corresponding modifications for $\kappa \in \{v,\times,0 \}$.

In view of the inverse shearlet transform we prove that our discrete shearlets
constitute a Parseval frame of the finite Euclidean space $L_2({\cal I})$.
Recall that for a Hilbert space $\mathcal{H}$ a sequence $\{u_j:j\in {\cal J}\}$
is a \emph{frame} if and only if there exist constants $0 < A\leq B <\infty$ such that
$$
  A\|f\|^2
\leq
  \sum_{j\in {\cal J}}
  |\langle
	f, u_j
  \rangle|^2
\leq
  B \|f\|^2
\quad\text{for all }
f\in\mathcal{H}.
$$
The frame is called \emph{tight} if $A=B$ and a \emph{Parseval} frame if $A = B = 1$.
For Parseval frames the reconstruction formula
$$
  f =
  \sum_{j\in\mathcal{J}}
  \langle f,u_j\rangle u_j
\quad\text{for all }f\in\mathcal{H}
$$
holds true, see \cite{Ch03,Ma08}. In the $n$-dimensional Euclidean space we can
arrange the frame elements $u_j$, $j = 1,\ldots,\tilde n \ge n$ as rows of a matrix $U$.
Then we have indeed a frame if $U$ has full rank and a Parseval frame if and only if $U^\tT U = I_n$.
Note that $U U^\tT = I_{\tilde n}$ is only true if the frame is an orthonormal basis.
The Parseval frame transform and its inverse read
\begin{equation}\label{eq:trafo}
(\langle f,u_j \rangle)_{j =1}^{\tilde n} = U f
\quad {\rm and} \quad
f = U^\tT (\langle f,u_j \rangle)_{j =1}^{\tilde n}.
\end{equation}
By the following theorem our shearlets provide such a convenient system.
%
\begin{theorem}\label{theorem:discreteShearletFrame}
The discrete shearlet system
\begin{equation*}
\{
\psi^h_{j,k,m}, \,\psi^v_{j,k,m}, \, \psi^{h\times v}_{j,\pm 2^j,m}: j=0,\ldots,j_0-1, -2^{j}+1\leq k\leq 2^j-1, m \in {\cal I}
\}
\; \cup \;
    \{
	  \phi_{m} : m\in{\cal I}
    \}
\end{equation*}
provides a Parseval frame for $L_2({\cal I})$.
\end{theorem}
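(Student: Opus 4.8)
The plan is to show that the matrix $U$ whose rows are the discrete shearlet elements satisfies $U^\tT U = I_{N^2}$, equivalently that for every $f \in L_2(\mathcal{I})$ the Parseval identity
$$
\|f\|^2 = \sum_{m}|\langle f,\phi_m\rangle|^2 + \sum_{\kappa\in\{h,v\}}\sum_{j,k,m}|\langle f,\psi^\kappa_{j,k,m}\rangle|^2 + \sum_{j,m}\bigl(|\langle f,\psi^{h\times v}_{j,2^j,m}\rangle|^2+|\langle f,\psi^{h\times v}_{j,-2^j,m}\rangle|^2\bigr)
$$
holds. The natural route is to pass to the Fourier side via the Plancherel formula $\langle f,g\rangle = \frac{1}{N^2}\langle\hat f,\hat g\rangle$ and exploit the fact that, for fixed $(\kappa,j,k)$, the family $\{\psi^\kappa_{j,k,m}\}_{m\in\mathcal{I}}$ consists of translates over the full grid, so their Fourier transforms are $\hat f(\omega)$ times the fixed window $\hat\psi^\kappa_{j,k}(\omega)$ modulated by $e^{-2\pi i\langle\omega,m\rangle/N}$.

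First I would record the translation-invariance reduction: for a fixed window $\hat w(\omega)$ on $\Omega$, summing $|\langle \hat f, \hat w(\cdot)e^{-2\pi i\langle\cdot,m\rangle/N}\rangle|^2 / N^4$ over all $m\in\mathcal{I}$ collapses, by the discrete Plancherel theorem applied in the $m$-variable (the orthogonality $\sum_m e^{2\pi i\langle\omega-\omega',m\rangle/N} = N^2\delta_{\omega,\omega'}$), to $\frac{1}{N^2}\sum_{\omega\in\Omega}|\hat w(\omega)|^2|\hat f(\omega)|^2$. Hence the full left-hand sum equals
$$
\frac{1}{N^2}\sum_{\omega\in\Omega}|\hat f(\omega)|^2 \, \Phi(\omega), \qquad \Phi(\omega) := |\hat\phi(\omega)|^2 + \sum_{\kappa\in\{h,v\}}\sum_{j,k}|\hat\psi^\kappa_{j,k}(\omega)|^2 + \sum_{j}\bigl(|\hat\psi^{h\times v}_{j,2^j}(\omega)|^2+|\hat\psi^{h\times v}_{j,-2^j}(\omega)|^2\bigr),
$$
and comparing with $\|f\|^2 = \frac{1}{N^2}\sum_\omega|\hat f(\omega)|^2$ the theorem is equivalent to the pointwise statement $\Phi(\omega)=1$ for all $\omega\in\Omega$.

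Second, I would verify $\Phi\equiv 1$ by a case analysis over the cones, reducing everything to Lemma \ref{lem:prop_psi1_psi2} and the matching relation \eqref{eq:OverlapPsi1_2}. On $\mathcal{C}^0$ only $\hat\phi$ contributes and $|\hat\phi|^2=|\varphi(\omega_1)|^2$ or $|\varphi(\omega_2)|^2$ equals $1$ since $\varphi\equiv 1$ on $[-\tfrac12,\tfrac12]$ in the relevant variable; on the overlap annulus $\cone^\square$ the radial identity $|\hat\psi_1(\omega)|^2+|\varphi(\omega)|^2 = 1$ supplies the missing mass together with the $j=0$ shearlet terms. On the interior of $\mathcal{C}^h$ (away from the seam) the sum factors as $\bigl(\sum_{j\ge 0}|\hat\psi_1(4^{-j}\omega_1)|^2\bigr)\bigl(\sum_{k=-2^j}^{2^j}|\hat\psi_2(2^j\tfrac{\omega_2}{\omega_1}+k)|^2\bigr)$, each factor being $1$ by the two parts of Lemma \ref{lem:prop_psi1_psi2} — here one must check the discretized scale range $j=0,\dots,j_0-1$ together with the finite grid $\Omega$ still exhausts the frequencies actually present, i.e. that $|\omega_1|>1$ forces enough scales and that $\lfloor\frac12\log_2 N\rfloor$ scales suffice on $\Omega$; the vertical cone is symmetric. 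The delicate point is the seam set $\mathcal{C}^\times$: there the ``half'' pieces $\hat\psi^h_{j,\pm2^j}$, $\hat\psi^v_{j,\pm2^j}$ and $\hat\psi^\times_{j,\pm2^j}$ have been glued precisely so that their sum reproduces the value the unrestricted tensor window $\hat\psi_1\hat\psi_2$ would have taken, so I would argue that the characteristic functions $\chi_{\cone^h},\chi_{\cone^v},\chi_{\cone^\times}$ partition $\cone^\square^c$ without overlap on $|\omega_1|=|\omega_2|$, making the glued window agree with $\hat\psi^h_{j,\pm2^j}+\hat\psi^v_{j,\pm2^j}+\hat\psi^\times_{j,\pm2^j} = \hat\psi_1\hat\psi_2\,(\chi_{\cone^h}+\chi_{\cone^v}+\chi_{\cone^\times})$, after which the same factorization argument closes the case.

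The main obstacle is the bookkeeping at the seam lines $\mathcal{C}^\times$ and along the cone boundaries $|s|=1$: one must be careful that each frequency $\omega\in\Omega$ is counted with total weight exactly one across the overlapping pieces $h$, $v$, $\times$ — in particular that the cutoffs for $1-\sqrt{a}<|s|<1+\sqrt{a}$ and the gluing for $|k|=2^j$ neither drop nor double-count energy. Everything else is the routine translation-to-convolution reduction plus direct substitution into Lemma \ref{lem:prop_psi1_psi2} and \eqref{eq:OverlapPsi1_2}.
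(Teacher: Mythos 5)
Your proposal follows essentially the same route as the paper's proof: the paper's use of \eqref{eq:wichtig} together with Parseval's formula in the translation variable $m$ is exactly your ``translation-invariance reduction'' to $\frac{1}{N^2}\sum_{\omega}|\hat w(\omega)|^2|\hat f(\omega)|^2$, and the paper then verifies, just as you outline, that the squared windows sum pointwise to one by combining Lemma \ref{lem:prop_psi1_psi2} on the two cones, the glued seam-line shearlets, and the overlap identity \eqref{eq:OverlapPsi1_2} on $\cone^\square$. The seam-line and scale-range bookkeeping you flag as delicate is treated tersely but in the same spirit in the paper, so there is no substantive difference in approach.
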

%
\begin{proof}
We have to show that
\begin{equation*}
 \|f\|^2
=
  \sum_{\kappa\in\{h,v\}} \sum_{j=0}^{j_0-1} \sum_{k=-2^j+1}^{2^j-1} \sum_{m\in {\cal I}}
  |\langle f,\psi_{j,k,m}^\kappa \rangle|^2
+
  \sum_{j=0}^{j_0-1} \sum_{k=\pm 2^j} \sum_{m\in {\cal I}}
  |\langle f,\psi_{j,k,m}^{h\times  v} \rangle|^2
+
  \sum_{m\in {\cal I}}
  |\langle f,\phi_{m} \rangle|^2 = : C.
\end{equation*}
By \eqref{eq:wichtig} we know that
$$
	\langle f,\psi_{j,k,m}^h \rangle
 =
	\frac{1}{N^2}
	\sum_{\omega\in\Omega} \hat g_{j,k} (\omega)e^{2\pi i\langle \omega,m \rangle/N}
  =
	g_{j,k}(m),
$$
so that by Parseval's formula
$$
\sum_{m\in{\cal I}}
  |\langle
	f, \psi_{j,k,m}^h
  \rangle|^2
=
  \sum_{m\in{\cal I}}
  |g_{j,k}(m)|^2
=
  \|g_{j,k}\|^2
=
\frac{1}{N^2} \|\hat g_{j,k}\|^2.
$$
Analogous computations can be done for the vertical cone,
the seam-line part and the  low-pass part, see \cite{Hae11}.
Summing up the different pieces we obtain
\begin{equation*}
\begin{split}
  C
&=
  \frac{1}{N^2} \left(
  \sum_{j= 0}^{j_0-1}
  \sum_{k=-2^j+1}^{2^j-1}
  \sum_{\omega\in\Omega}
  |  \hat{\psi}   (4^{-j}\omega_1,4^{-j}k\omega_1+2^{-j}\omega_2)  |^2
  |  \hat{f}(\omega_1,\omega_2)  |^2\right.
  \\
&\quad +
  \sum_{j= 0}^{j_0-1}
  \sum_{k=-2^j+1}^{2^j-1}
  \sum_{\omega\in\Omega}
  |\hat{\psi} (4^{-j}\omega_2,4^{-j}k\omega_2+2^{-j}\omega_1)|^2
  |\hat{f}(\omega_1,\omega_2)|^2
  \\
&\quad +
  \sum_{j= 0}^{j_0-1} \sum_{k=\pm 2^j}
  \left(
  \sum_{\omega\in\Omega}
  |\hat{\psi} (4^{-j}\omega_1,4^{-j}k\omega_1+2^{-j}\omega_2)|^2
  |
	\hat{f}(\omega_1,\omega_2)
  |^2
  \chi_{\cone^h}
  \right.
  \\
&\quad +
  \sum_{\omega\in\Omega}
  |
	\hat{\psi}
	(4^{-j}\omega_2,4^{-j}k\omega_2+2^{-j}\omega_1)
  |^2
  |
	\hat{f}(\omega_1,\omega_2)
  |^2
  \chi_{\cone^v}
  \\
&\quad +
  \left.
  \sum_{\omega\in\Omega}
  |
	\hat{\psi}
	(4^{-j}\omega_1,4^{-j}k\omega_1+2^{-j}\omega_2)
  |^2
  |
	\hat{f}(\omega_1,\omega_2)
  |^2
  \chi_{\cone^\times}
  \right)
  \\
&\quad + \left.
  \sum_{\omega\in\Omega}
  |
	\hat{\phi}(\omega_1,\omega_2)
  |^2
  |
	\hat{f}(\omega_1,\omega_2)
  |^2
\right)
\end{split}
\end{equation*}
which can be further simplified by Lemma \ref{lem:prop_psi1_psi2} as
\begin{equation*}
	\begin{split}
C &= \frac{1}{N^2}
\biggl(
	\sum_{\omega\in\cone^h}
	|
	  \hat{f}(\omega_1,\omega_2)
	|^2
	\underbrace{
	  \sum_{j= 0}^{j_0-1}
	  |
		\hat{\psi}_1(4^{-j}\omega_1)
	  |^2
	}_{\equiv 1 \text{ for } |\omega_1|\geq 1}
	\underbrace{
	\sum_{k=-2^j}^{2^j}
	  |
		\hat{\psi}_2(2^j\frac{\omega_2}{\omega_1} + k)
	  |^2
	}_{
	  \equiv 1 }\biggr.
  \\
&\quad +
	\sum_{\omega\in\cone^v}
	|
	  \hat{f}(\omega_1,\omega_2)
	|^2
	\underbrace{
	  \sum_{j= 0}^{j_0-1}
	  |
		\hat{\psi}_1(4^{-j}\omega_2)
	  |^2
	}_{
	  \equiv 1\text{ for } |\omega_2| \geq 1
	}
	\underbrace{
	\sum_{k=-2^j}^{2^j}
	  |
		\hat{\psi}_2(2^j\frac{\omega_1}{\omega_2} + k)
	  |^2
	}_{
	  \equiv 1
	}
	\\
&\quad +
	\sum_{\omega\in\Omega}
	\sum_{j=0}^{j_0-1}
	 |\hat{f}(\omega_1,\omega_2)|^2
	|
	\underbrace{
	\hat{\psi}
	  (4^{-j}\omega_1,0)
	}_{
	\equiv 1
	}
	|^2
	\chi_{\cone^\times}
  \\
&\quad +\biggl.
  \sum_{\omega\in\Omega}
  |
  \underbrace{
	\hat{\phi}(\omega_1,\omega_2)
  }_{
	\equiv 1\text{ for }\omega\in[-\frac{1}{2},\frac{1}{2}]^2
  }
  |^2
  |
  \hat{f}(\omega_1,\omega_2)
  |^2
\biggr).
	\end{split}
\end{equation*}
Finally we obtain by \eqref{eq:OverlapPsi1_2} that
\begin{align*}
  C
&=
  \frac{1}{N^2} \Bigl(
  \sum_{\omega\in\Omega\setminus\cone^\square}
  |\hat{f}(\omega_1,\omega_2)  |^2
\; + \;
  \sum_{\omega\in\cone^\square}
  |	\hat{f}(\omega_1,\omega_2)   |^2
\bigl(
	  |\hat{\psi}_1(\omega_1)|^2 +  |\hat{\psi}_1(\omega_2)|^2 + |\hat{\phi}(\omega_1,\omega_2)|^2
  \bigr)
\Bigr)
  \\
&=
  \frac{1}{N^2}
  \sum_{\omega\in\Omega}
  |
	\hat{f}(\omega_1,\omega_2)
  |^2
\; = \;
  \|f\|^2.
\end{align*}
and we are done.
\end{proof}
Having a Parseval frame the {\it inverse shearlet transform} reads
\begin{equation*}
  f
=
  \sum_{\kappa\in\{h,v\}}\sum_{j= 0}^{j_0-1} \sum_{k=-2^j+1}^{2^j-1}\sum_{m\in{\cal I}}
  \langle f,\psi_{j,k,m}^\kappa \rangle \psi_{j,k,m}^\kappa
+
  \sum_{j= 0}^{j_0-1}\sum_{k=\pm 2^j}\sum_{m\in{\cal I}}
  \langle f,\psi_{j,k,m}^{h\times  v} \rangle \psi_{j,k,m}^{h\times  v}
+
  \sum_{m\in{\cal I}}
  \langle f,\phi_{m} \rangle \phi_{m}.
\end{equation*}
The actual computation of $f$ from given coefficients $c(\kappa,j,k,m) := \langle f,\psi_{j,k,m}^\kappa \rangle$
is done in the Fourier domain. Due to the linearity of the Fourier transform we get,
e.g., for the horizontal cone
\begin{align*}
  \hat{f}(\omega)\chi_{\cone^h}
&=
  \sum_{j= 0}^{j_0-1}
  \sum_{k=-2^j+1}^{2^j-1}
  \sum_{m\in{\cal I}}
  \langle
  f,
  \psi^h_{j,k,m}
  \rangle
  \hat{\psi}^h_{j,k,m}(\omega)
  \\
&=
  \sum_{j= 0}^{j_0-1}
  \sum_{k=-2^j+1}^{2^j-1}
 \underbrace{ \sum_{m\in{\cal I}}
  c(h,j,k,m)
  e^{-2\pi i \langle \omega,m \rangle}
}_{\fftn(c(h,j,k,\cdot))(\omega_1,\omega_2)}
  \hat{\psi}
  (4^{-j}\omega_1,4^jk\omega_1+2^{-j}\omega_2).
\end{align*}
The inner sum can be interpreted as a two-dimensional discrete Fourier transform and can be computed via a FFT.
Hence,
$\hat{f}$ can be computed by simple multiplications of these Fourier-transformed shearlet coefficients
with the dilated and sheared spectra of $\psi$ and afterwards summing over all  scales $j$ and all shears $k$.
The spectra are the same as for the transform itself and can be reused.
Finally we get $f$ itself by an iFFT of $\hat{f}$.
For implementation details we refer to \cite{Hae11}.

\begin{remark} \label{rem:other}
1.
We are aware of our larger oversampling factor in comparison with, e.g., \emph{ShearLab}.
Having 4 scales we obtain 61 images of the same size as the original image.
But since shearlets are designed to detect edges in images we like to avoid any down-sampling and keep translation invariance.
A possibility to reduce the memory usage would be to use the compact support of the shearlets in the frequency domain and only compute them on a ``relevant'' region.
But we then would also have to store the position and size of each region what decreases the memory savings and would make the implementation a lot more complicated.

2.
In \cite{SKZ11} and the respective implementation \emph{ShearLab} a pseudo-polar Fourier-transform is used to implement a discrete (or digital) shearlet transform.
For the scale $a$ and the shear $s$ the same discretization is used.
But for the translation $t$ the authors set $t_{j,k,m}:=A_{a_j}S_{s_{j,k}}m$.
Thus, their discrete shearlet becomes
\begin{equation*}
  \hat{\widetilde{\psi}}_{j,k,m}(\omega)
=
  \hat{\psi}(A_{a_j}S_{s_{j,k}}^\tT\omega)e^{-2\pi i\langle \omega, A_{a_j}S_{s_{j,k}}m \rangle}
=
  \hat{\psi}(A_{a_j}S_{s_{j,k}}^\tT\omega)e^{-2\pi i\langle S_{s_{j,k}}^\tT A_{a_j}\omega, m \rangle}
  .
\end{equation*}
Since the operation $S_{s_{j,k}}^\tT A_{a_j}\omega$ would destroy the pseudo-polar grid a ``slight'' adjustment is made and the exponential term is replaced by
\begin{equation*}
	e^{-2\pi i\langle (\theta\circ S_{s_{j,k}}^{-\tT})S_{s_{j,k}}^\tT A_{a_j}\omega, m \rangle}
\end{equation*}
with $\theta\colon \RR\setminus\{0\}\times\RR \to \RR\times\RR$ and $\theta(x,y) = (x,\frac{y}{x})$ such that
\begin{equation*}
	e^{-2\pi i\langle (\theta\circ S_{s_{j,k}}^{-\tT})S_{s_{j,k}}^\tT A_{a_j}\omega, m \rangle}
=
	e^{-2\pi i\left\langle \left(a_j\omega_1, \sqrt{a_j}\frac{\omega_2}{\omega_1}\right), m \right\rangle}.
\end{equation*}
With this adjustment the last step of the shearlet transform can be obtained with a standard inverse fast Fourier transform
(similar as in our implementation).
Unfortunately this is no longer related to translations of the shearlets in the time domain.

3.
Having a look at the Preprint of this paper,  D. Labate  made us aware of his Preprint \cite{GL11} where he and K. Guo proposed an interesting, new construction of smooth diagonal shearlets in contrast to our
continuous diagonal shearlets $\psi^{h\times v}$. We implemented both constructions and
as the smooth approach is promising for theoretical purposes the differences in our applications were negligible.
\end{remark}

\section{Shearlet Regularization in Image Segmentation} \label{sec_segm_shearlets}
In this section, we incorporate shearlets into a convex multi-label segmentation model.
We start by explaining the convex relaxation model in matrix-vector notation. The approach is based on
\cite{LBS09,LS10,SS11} and is closely related to \cite{BYT09,EG11,JKS07,KSY11,PCCB09,SKC10,ZGFN08}.

\paragraph{Segmentation Model.}
We segment an image by assigning different labels to different areas of the image based on the gray or RGB value of the pixels.
We assume that the respective values are given in a codebook.
In the following, we restrict ourselves to gray value images and will comment the slight adjustments for RGB images later.
For a simple matrix-vector notation we assume the image to be column-wise reshaped as a vector.
Thus, having an image $f\in\RR^{N\times N}$
we obtain the vectorized image $\text{vec}(f)\in\RR^{N^2}$ where we retain the notation $f$ for both the original and the reshaped image.
Let $q\in\NN$ be the number of clusters
which will be labeled by labels in $\mathcal{C}:=\{1,\ldots,q\}$.

For a given image $f\in\RR^{N^2}$ and a codebook $c=(c_1,\ldots,c_q)$ we want to find a labeling vector $u^\star\in\{0,1\}^{qN^2}$
that provides a ``good'' segmentation of $f$. For the interpretation it is more convenient to take $u^\star\in\{0,1\}^{N\times N\times q}$
as $q$ layers of an image in $\{0,1\}^{N\times N}$.
Usually for each pixel only the entry in one layer is $1$ and the entries in the other layers are $0$.
Consequently the pixel gets the label according to the index of the respective layer.
As a relaxation we allow non-zero entries in every layer but restrict the sum over all layers for each pixel to be equal to $1$,
i.e., $u^\star\in [0,1]^{qN^2}$ and $\sum_{k=0}^{q-1} u^\star[r+kN^2] = 1$
for all $1\leq r \leq N^2$. The respective label is chosen according to the index of the largest entry for each pixel.

The following variational approach finds $u^\star$ as the minimizer of the functional
\begin{equation*}
  F(u) = \langle u,s \rangle + \lambda \Psi(u)\quad\text{subject to }u\in C
\end{equation*}
where
\begin{equation*}
  C := \{
  x\in[0,1]^{qN^2}
  :
  \sum_{k=0}^{q-1}
  x[r+kN^2]
  = 1\text{ for all }
  1\leq r \leq N^2
  \}.
\end{equation*}
The constraint guarantees that the sum over all layers for each pixel is equal to $1$.
We will refer to the first term as the \emph{data term} and to the second as
 \emph{regularizer} and $\lambda > 0$ as the
\emph{regularization parameter}.

The relation to the given data $f$ is contained in $s\in\RR^{qN^2}$ which penalizes a certain distance between $f$ and the codebook $c$.
In each layer $i$ of $s$ we subtract the codebook value $c_i$ from $f$ - in formulas:
\begin{equation}\label{eq:computationOfs}
	s[r+(k-1)N^2]
=
  \|f[r]-c[k]\|_p^p
\quad\text{for }
  1\leq r\leq N^2,\
  1\leq k\leq q.
\end{equation}
Thus, for big values in $s$ the data term gets small for small values in $u$. On the other hand for small values in $s$
we can chose values in $u$ close to $1$ what is needed for the sum to be equal to $1$.
The trivial solution $u=0$ is excluded due to the constraint $u\in C$. The regularization term is chosen to provide ``smooth'' layers.

With the regularization parameter one can weight
between data exactness and smoothness.
A meanwhile frequent choice for the regularizer
is the discrete \emph{TV-functional} in its isotropic or anisotropic form.
Note that for the anisotropic choice graph cut algorithms
can be efficiently applied,
see, e.g., \cite{BVZ01,BT09} and \cite{YBBT11} for a convex relaxation approach through continuous max-flow.
This is also true for penalizers involving anisotropic variants of NL-means.
While the TV-functional is well-suited for cartoon-like parts,
NL-means are a powerful tool for restoring textured regions in images \cite{BCM05,GO07,GO07a}.
We will use the TV-functional and NL-means regularizers in an isotropic form for numerical comparisons
in Section \ref{sec:num_results}.

In this paper, we propose to minimize the functional
\begin{equation*}
  F(u) = \langle u,s\rangle + \|\Lambda (I_q \otimes \mathcal{S}) u \|_1 + \iota_C(u),
\end{equation*}
where $I_q$ is the identity matrix in $\RR^{q\times q}$ and $\mathcal{S}$ is the shearlet-transform applied to a vector in $\RR^{N^2}$ in the sense of
\eqref{eq:trafo}. In particular we have that
\begin{equation} \label{eq:important}
\mathcal{S}^\tT \mathcal{S} = I_{N^2}.
\end{equation}
Finally, $I_q \otimes \mathcal{S}$ simply
applies this shearlet-transform to each ``layer'' of $u$ separately.
The indicator function $\iota_C$ is defined by
\begin{equation*}
  \iota_C(u)
  :=
  \begin{cases}
    0,&u\in C \\
    \infty, &u\not\in C.
  \end{cases}
\end{equation*}
Observe that our ``regularization parameter'' $\Lambda$ is a diagonal matrix since we want to apply
different parameters to different scales of the shearlet transform. With these notations
we can choose different parameters for each scale and shear and even for each layer. In the following we will use the same parameter
for each shear but different parameters for each scale.
Thus, we simply write $\Lambda = (\lambda_0,\lambda_1,\ldots,\lambda_{j_0})$
where $\lambda_0$ is the parameter for the ``low frequency'' and
$\lambda_1,\ldots,\lambda_{j_0}$ are the parameters for the different scales.

Summing up, finding an appropriate labeling vector $u^\star$ for the image $f$ is equivalent to solving the problem
\begin{equation}\label{eq:problemSegmentation}
u^\star = \argmin_{u\in\RR^{qN^2}} \{\langle u,s \rangle +  \| \Lambda(I_q \otimes \mathcal{S}) u \|_1 + \iota_C(u) \}.
\end{equation}

\paragraph{Alternating Direction Method of Multipliers.}
To solve the problem in (\ref{eq:problemSegmentation}) we want to use the ADMM which applies in general to
\begin{equation*}\label{eq:ADMMproblem}
\argmin_{x\in\RR^m, y\in\RR^n}\{G_1(x) + G_2(y)\}\text{ subject to } Ax = y
\end{equation*}
with proper, closed, convex functions $G_1:\RR^m\to\RR\cup\{\infty\}$, $G_2:\RR^n\to\RR\cup\{\infty\}$ and $A\in\RR^{n\times m}$.
The ADMM splits the problem into the following smaller subproblems which have to be solved alternatingly.
\\[1ex]
{\bf General ADMM Algorithm:}
Initialization: $y^{(0)}, b^{(0)}\in\RR^m$ and $\gamma>0$.\\
For $i=0,\ldots$ repeat until a convergence criterion is reached:
\begin{align*}
  x^{(i+1)}
&=
  \argmin_{x\in\RR^m}
  \left\{
    G_1(x) + \frac{1}{2\gamma} \|b^{(i)} + Ax - y^{(i)}\|_2^2
  \right\},
  \\
  y^{(i+1)}
&=
  \argmin_{y\in\RR^n}
  \left\{
    G_2(y) + \frac{1}{2\gamma} \|b^{(i)} + Ax^{(i+1)} - y\|_2^2
  \right\},
  \\
  b^{(i+1)}
&=
  b^{(i)}
  +
  Ax^{(i+1)} - y^{(i+1)}.
\end{align*}

To apply this general ADMM algorithm we rewrite the problem (\ref{eq:problemSegmentation}) as
\begin{equation}\label{eq:problemSegmentationADMM}
  \argmin_{u,w\in\RR^{qN^2},v\in\RR^{\eta q N^2}}
  \{\langle
	u,s
  \rangle
  +
  \lambda \| v \|_1 + \iota_C(w) \}
  \text{ subject to }
  (I_q\otimes \mathcal{S})u = v,\ I_{qN^2}u = w
\end{equation}
where $\eta$ is the number of scales and shears in the shearlet transform.
Using
$G_1(u):= \langle u,s \rangle$,
$G_2(v,w) := \lambda \|v\|_1 + \iota_C(w)$ and
$A := \begin{pmatrix}I_q\otimes \mathcal{S} \\ I_{qN^2}\end{pmatrix}$
we obtain the following algorithm:
\\[1ex]
{\bf ADMM Algorithm for (\ref{eq:problemSegmentationADMM}):}
Initialization: $v^{(0)}\in\RR^{\eta q N^2}, w^{(0)}\in\RR^{qN^2}, b^{(0)}\in\RR^{(\eta+1)qN^2}$ and $\gamma>0$.\\
For $i=0,\ldots$ repeat until a convergence criterion is reached:
\begin{align}\label{eq:ADMM_u}
  u^{(i+1)}
&=
  \argmin_{u\in\RR^{qN^2}}
  \left\{
    \langle u,s \rangle + \frac{1}{2\gamma} \|b^{(i)} + Au - \begin{pmatrix}v^{(i)}\\ w^{(i)}\end{pmatrix}\|_2^2
  \right\},
  \\ \label{eq:ADMM_vw}
  \begin{pmatrix}v^{(i+1)}\\ w^{(i+1)}\end{pmatrix}
&=
  \argmin_{v\in\RR^{\eta qN^2},w\in\RR^{qN^2}}
  \left\{
    \lambda \| v \|_1 + \iota_C(w) + \frac{1}{2\gamma} \|b^{(i)} + Au^{(i+1)} - \begin{pmatrix} v \\ w\end{pmatrix}\|_2^2
  \right\},
  \\ \label{eq:ADMM_b}
  b^{(i+1)}
&=
  b^{(i)}
  +
  Au^{(i+1)} -\begin{pmatrix}v^{(i+1)}\\ w^{(i+1)}\end{pmatrix}.
\end{align}
The last step (\ref{eq:ADMM_b}) is a simple update of $b$ and can be split into two parts: one for ``$v$'' and one for ``$w$''. We obtain
\begin{align*}
  b_v^{(i+1)}
&=
  b_v^{(i)}
  +
  (I_q\otimes \mathcal{S})u^{(i+1)} - v^{(i+1)},
  \\
  b_w^{(i+1)}
&=
  b_w^{(i)} + I_{qN^2}u^{(i+1)} - w^{(i+1)}.
\end{align*}
We have to comment the first two steps. Due to the differentiability of (\ref{eq:ADMM_u})
its solution follows by setting the gradient of the functional on the right-hand side to zero. Thus, the minimizer is given by the solution of
\begin{align*}
  0
&=
  s + \frac{1}{\gamma} A^\tT\left(Au + b^{(i)} - \begin{pmatrix}v^{(i)}\\ w^{(i)}\end{pmatrix}\right)
  \\
  A^\tT Au
&=
  A^\tT\left(\begin{pmatrix}v^{(i)}\\ w^{(i)}\end{pmatrix} - \begin{pmatrix}b_v^{(i)}\\ b_w^{(i)}\end{pmatrix} \right) - \gamma s.
\end{align*}
By \eqref{eq:important} we obtain
\begin{equation*}
  A^\tT A
=
  (I_q\otimes \mathcal{S}^\tT, I_{qN^2})\begin{pmatrix}I_q\otimes \mathcal{S} \\ I_{qN^2}\end{pmatrix}
=
  I_q \otimes \underbrace{\mathcal{S}^\tT\mathcal{S}}_{I_N^2} + I_{qN^2}
=
  2 I_{qN^2},
\end{equation*}
so that no linear system has to be solved and we get simply
\begin{align*}
  u
&=
  \frac{1}{2}A^\tT\left(\begin{pmatrix}v^{(i)}\\ w^{(i)}\end{pmatrix} - \begin{pmatrix}b_v^{(i)}\\ b_w^{(i)}\end{pmatrix} \right) - \frac{1}{2}\gamma s
  \\
&=
  \frac{1}{2}
  \left[
	(I_q \otimes \mathcal{S})^\tT(v^{(i)}-b_v^{(i)})
  +
	I_{qN^2}(w^{(i)}-b_w^{(i)}) - \gamma s
  \right]
\end{align*}
which means that we have just to apply an inverse shearlet transform.

The second step (\ref{eq:ADMM_vw}) can be minimized for $v$ and $w$ separately. Thus, we have
\begin{align*}
  v^{(i+1)}
&=
  \argmin_{v\in\RR^{\eta qN^2}}
  \Bigl\{
    \| \Lambda v \|_1 + \frac{1}{2\gamma} \|v - \underbrace{(b_v^{(i)} + (I_q\otimes \mathcal{S})u^{(i+1)})}_{=:g_v}\|_2^2
  \Bigr\}
  \\
&=
  \argmin_{v\in\RR^{\eta qN^2}}
  \left\{
    \gamma \| \Lambda v \|_1 + \frac{1}{2} \|v - g_v\|_2^2
  \right\}
  \\
&=
  S_{\gamma\Lambda}(g_v)\quad\text{(componentwise soft-shrinkage)}
\end{align*}
and
\begin{align*}
  w^{(i+1)}
&=
  \argmin_{w\in\RR^{qN^2}}
  \Bigl\{
    \iota_C(w) + \frac{1}{2\gamma} \|w - \underbrace{(b_w^{(i)} + I_{qN^2}u^{(i+1)})}_{=:g_w}\|_2^2
  \Bigr\}
  \\
&=
  P_C(g_w) \quad (\text{``pixelwise'' orthogonal projection onto } C).
\end{align*}

The {\it soft-shrinkage} $S_\lambda$ with a threshold $\lambda$ is defined as
\begin{equation*}
	S_\lambda(x)
:=
  \begin{cases}
	  x-\lambda&\text{for }x>\lambda,\\
	  0		   &\text{for }x\in[-\lambda,\lambda],\\
	  x+\lambda&\text{for }x<-\lambda.
  \end{cases}
\end{equation*}
The set $C$ is a subset of the hyperplane
\begin{equation*}
\{x\in\RR^q : \sum_{k=1}^qx_k = \langle x,\mathbf{1}_q \rangle = 1\} \subset \RR^q
\end{equation*}
The {\it orthogonal projection} of $g\in\RR^q$ onto the hyperplane is given by
\begin{equation*}
  \widetilde{g} = g + \frac{1}{q}\left(1-\sum_{k=1}^q g_k\right) \mathbf{1}_q.
\end{equation*}
This can easily be constructed geometrically by  calculating the intersection point of the straight line through $g$ with slope $\mathbf{1}_C$ and the hyperplane.
It cannot be guaranteed that $\widetilde{g}$ lies in $C$, i.e., some components could be less than zero. The projection onto $C$ can be achieved by setting these components
to zero and project again on a lower-dimensional space and so on.
\\

Finally we remark that primal-dual methods as in \cite{CP11,ZC08} could be applied instead of the ADMM.
However, due to the Parseval frame property which preserves us from
solving a linear system of equations in the first ADMM step, the primal-dual methods are not superior here.

\section{Numerical Examples}\label{sec:num_results}
In this section we demonstrate the performance of our algorithm by numerical examples.
We will compare our method with minimization models using the same
data term, but a different regularizer namely the classical TV regularizer
and an NL regularizer.
While the TV-functional is well-suited for cartoon-like parts,
NL-means are a powerful tool for restoring textured regions in images \cite{BCM05,GO07,GO09}.

There exist different possibilities to apply the TV-like functional in segmentation.
Here we focus on the method proposed in \cite{LBS09} which can be written in matrix-vector notation as
\begin{equation*} \label{eq:TVreg}
{\rm TV} (u) := \| \, | (I_q \otimes \nabla) u | \, \|_1
= \sum_{j=1}^N  | (I_q \otimes \nabla) u |(j)
= \sum_{j=1}^N \left( |\nabla u_1(j) |^2 + \cdots + |\nabla u_q (j) |^2 \right)^{\frac12},
\end{equation*}
see \cite{SS11}. As mentioned one could use alternatively
$\sum_{j=1}^N ( |\nabla u_1(j) | + \cdots + |\nabla u_q (j) | )$,
where we have not seen a visually different result in our applications.

Concerning the NL regularizer, we have used the approach in \cite{GO07a,ST10} with respect to the
$q$ labels. This means that we replace the discrete gradient matrix $\nabla$ by a matrix
$D$ which is constructed as follows:
\\
Initially, we start with a zero weight matrix $w$.
For every image pixel $i = (i_1,i_2)$, we compute for all $j =(j_1,j_2)$
within a search window of size $\varpi\times\varpi$ around
$i$ the distances
\begin{equation*} 
d_a (i,j) :=
\sum_{t_1 = -\lceil \frac{p-1}{2} \rceil}^{\lceil  \frac{p-1}{2} \rceil}
\sum_{t_2 = -\lceil \frac{p-1}{2} \rceil}^{\lceil  \frac{p-1}{2} \rceil}
G_a(t_1,t_2) \big( f(i_1+t_1,i_2+t_2) - f(j_1+t_1,j_2+t_2) \big)^2,
\end{equation*}
where $G_a$ denotes the discretized, normalized Gaussian with standard deviation $a$.
We refer to $p$ as the patch size.
Then, for given $\widetilde{m}$ we select $k \leq \widetilde{m}$ so-called 'neighbors' $j\neq i$ of $i$ for which
$d_a(i,j)$ takes the smallest values and set $w(i,j) = w(j,i) := 1$.
By setting $w(i,j)= w(j,i)$ it happens that several weights $w(j,\cdot)$ are already non-zero before we reach pixel $j$. To avoid that the number of non-zero weights becomes too large, we choose only $k = \min\{\widetilde{m}, 2\widetilde{m} -l \}$ neighbors for $l$ being the number of non-zero weights $w(j,\cdot)$ before the selection.
Now we construct the matrix $D \in \mathbb R^{mN,N}$ with $m = 2\widetilde{m}$ so that
$D$ consists of $m$ blocks of size $N \times N$, each having $-1$ as diagonal elements plus one additional
nonzero value $1$ in each row whose position is determined by the nonzero weights $w(i,j)$
and maybe some zero rows.
The following parameters were used throughout the experiments: $p=5, \varpi=15, a=2, \widetilde{m}=5$.\\

First we present an artificial example. In Fig. \ref{fig:gitter}
an original $0-1$ grid image and its noisy version with white Gaussian noise of standard deviation $0.2$ are shown.
The noisy image was segmented by minimizing the proposed convex functional with TV regularization (bottom left)
and shearlet regularization (bottom right). As codebook we have used the exact values
0 and 1.
While the first method fails to reconstruct the grid, our
shearlet segmentation method works perfect.
An exact reconstruction can be also obtained by applying the NL-means regularizer.
However in its isotropic version the later method requires more time than the shearlet approach.
%
\begin{figure}[htbp]
	\centering
	\subfigure[Original image]{\label{fig:gitterOrig}
\includegraphics[width=0.4\textwidth]{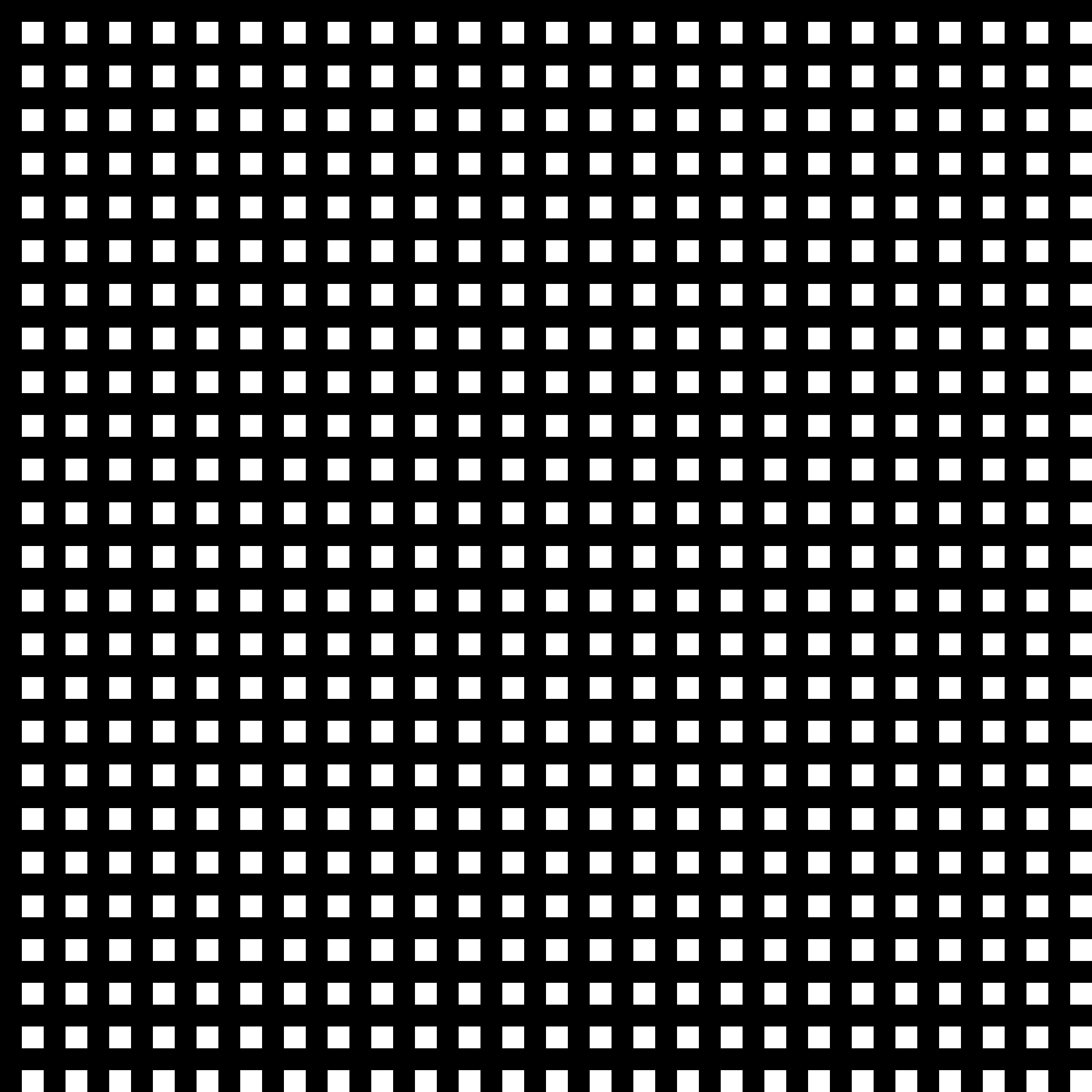}}
	\hspace{1cm}
	\subfigure[Noisy image]{\label{fig:gitterNoisy}
\includegraphics[width=0.4\textwidth]{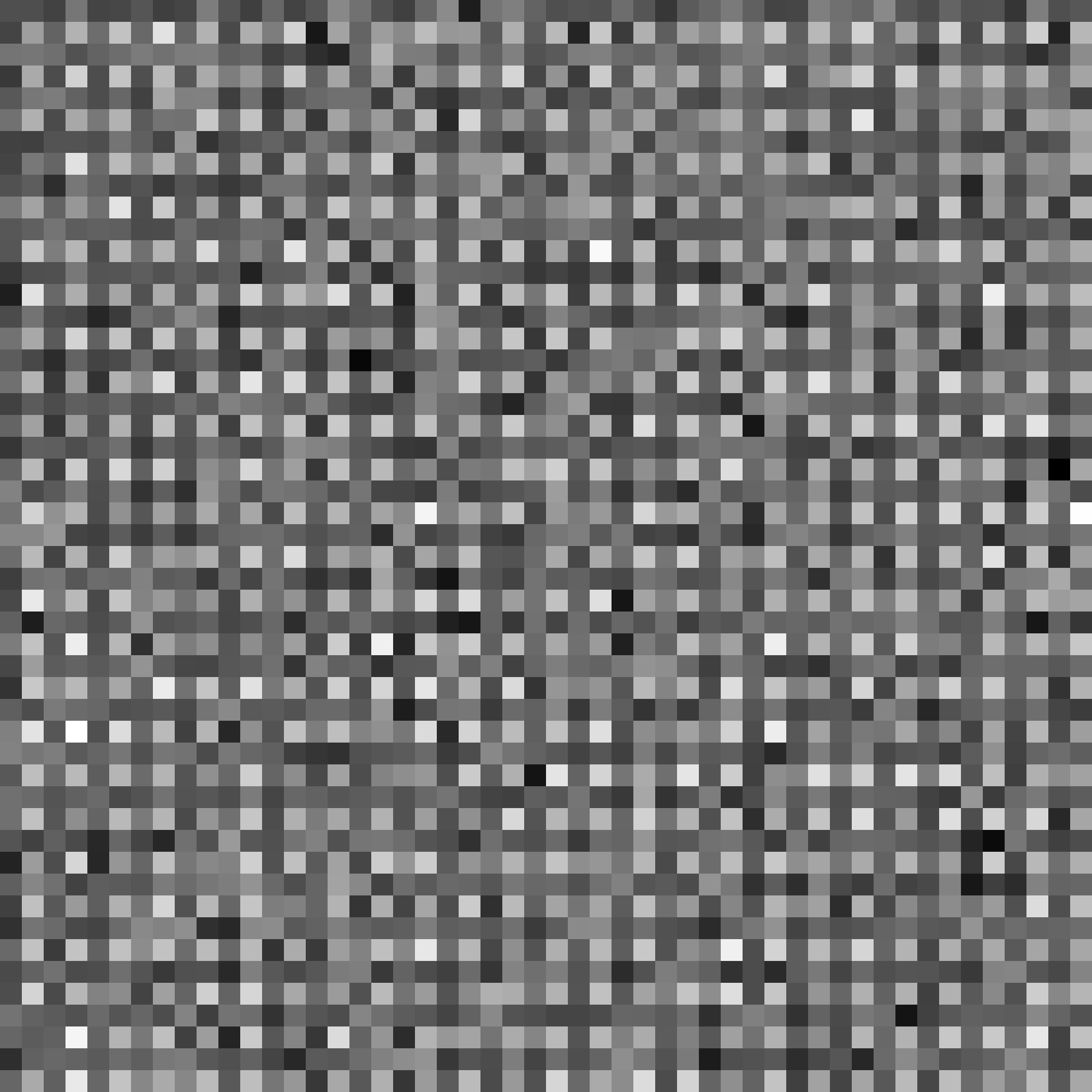}}
\\
	\subfigure[Segmented image with TV regularizer, \newline$\lambda = \frac{1}{60}$, $\gamma = \frac{1}{4}$, 300 iterations]{\label{fig:gitterTV}
\includegraphics[width=0.4\textwidth]{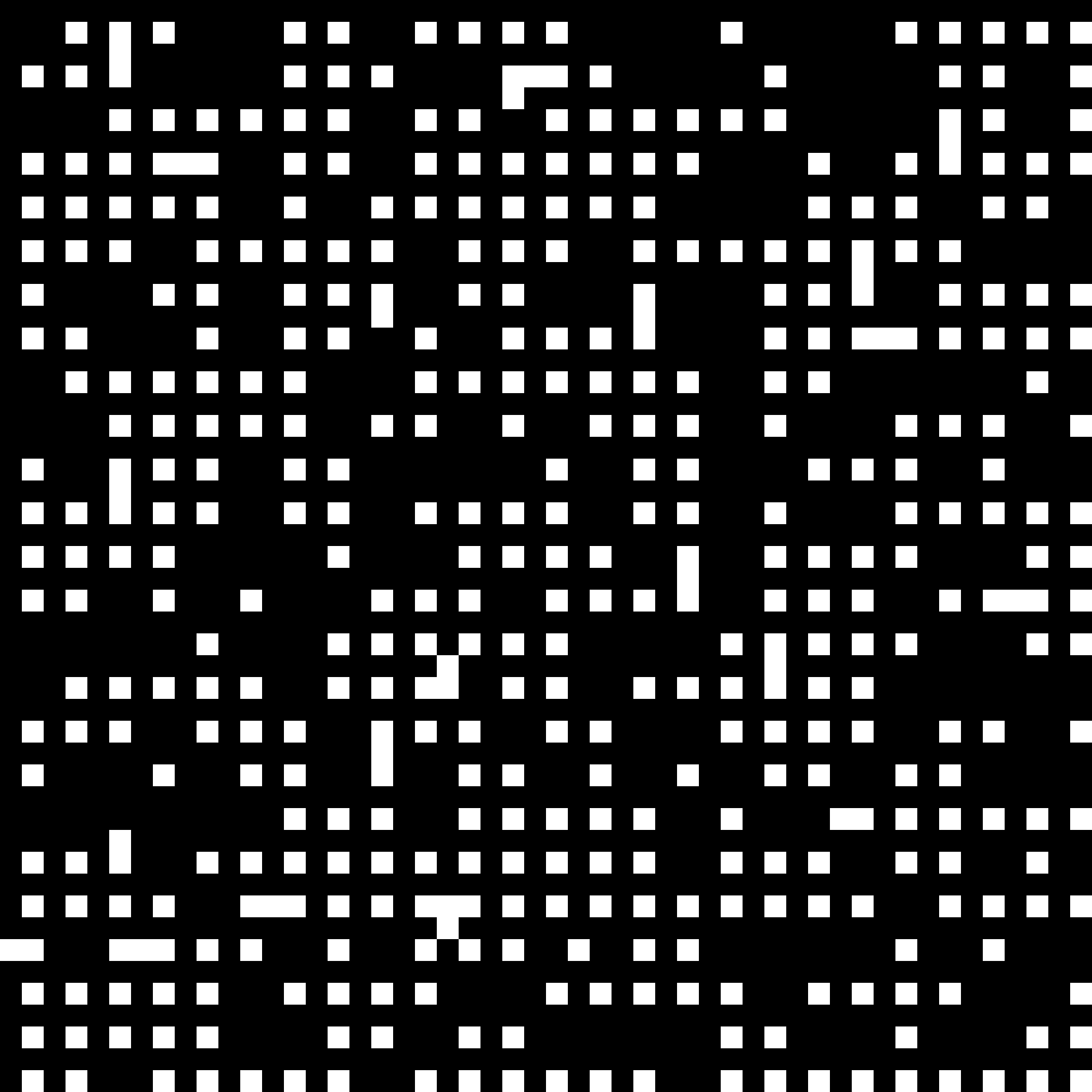}}
	\hspace{1cm}
	\subfigure[Segmented image with shearlet regularizer, \newline $\lambda = \frac{1}{512}$, $\gamma = \frac{1}{20}$, 10 iterations ]{\label{fig:gitterShearlet}
\includegraphics[width=0.4\textwidth]{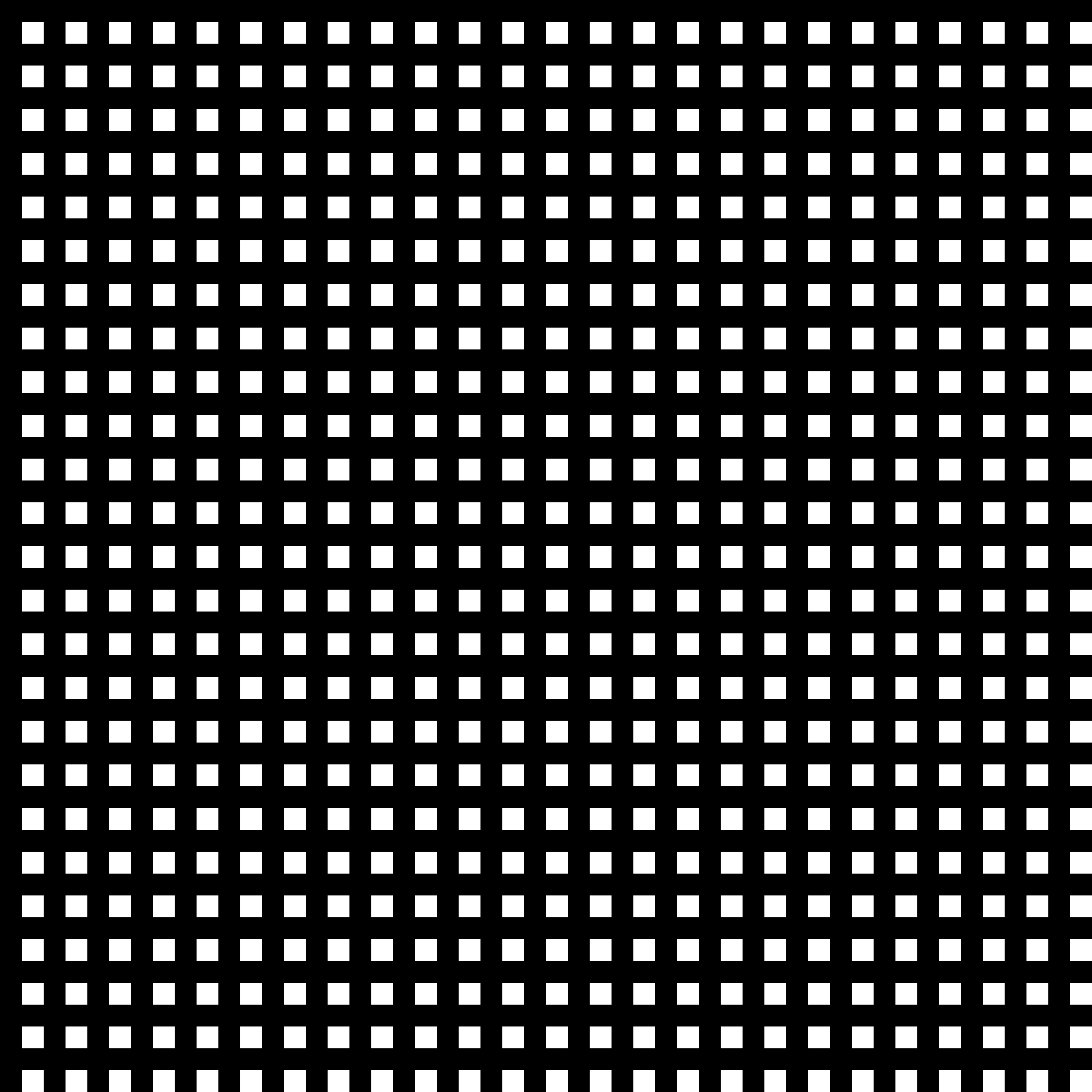}}
	\caption{Segmentation of a noisy grid image with TV regularization and shearlet regularization.}
	\label{fig:gitter}
\end{figure}
\\
Next we want to segment the cartoon type image in Fig. \ref{fig:cartoon}.
The top left image (Fig.  \ref{fig:cartoon_original}) shows the original cartoon image with four constant regions.
The image was taken from \cite{CCZ12}. In the top right image white Gaussian noise of standard deviation $0.1$ was added (Fig. \ref{fig:cartoon_noise_01}).
The bottom row shows the difference between the segmented image and the original one.
For the left image (Fig. \ref{fig:cartoon_noise_01_TV}) the TV regularizer was used and for the right image (Fig. \ref{fig:cartoon_noise_01_shearlet}) the shearlet regularizer.
In both images a white pixel represents the correctly labeled pixels and the black pixels indicate false labeled pixels.
Both methods have problems with correctly labeling the borders of the circles and
the TV method also fails on assigning the correct labels for the diagonal border of the
triangle whereas the shearlet regularizer labels all these points in the right way.
\begin{figure}[htbp]
  \centering
  \subfigure[Original cartoon image]
  {\label{fig:cartoon_original}
    \includegraphics[width=0.38\textwidth]{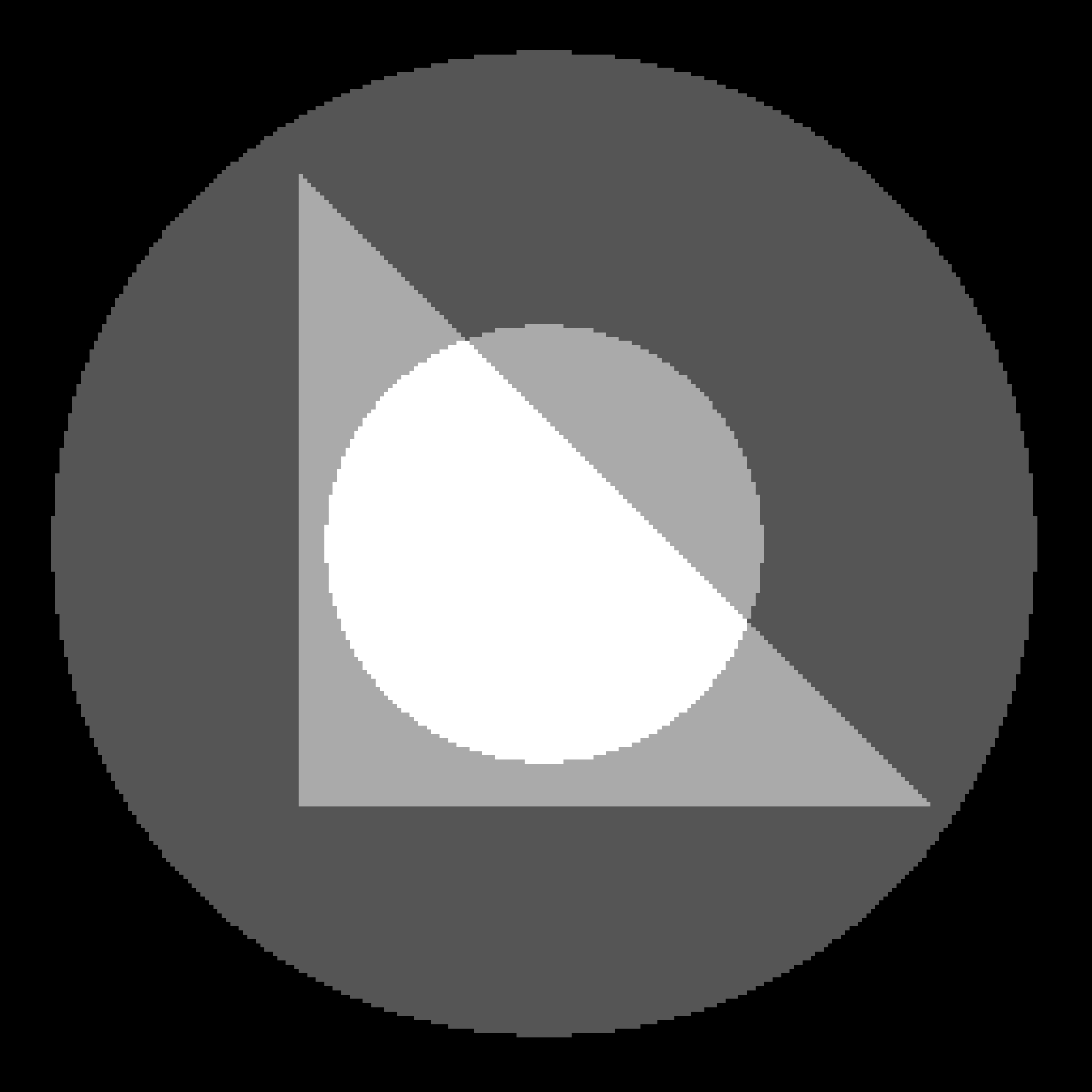}
  }
  \hspace{1cm}
  \subfigure[Cartoon image, noise $\sigma = 10\%$]
  {\label{fig:cartoon_noise_01}
    \includegraphics[width=0.38\textwidth]{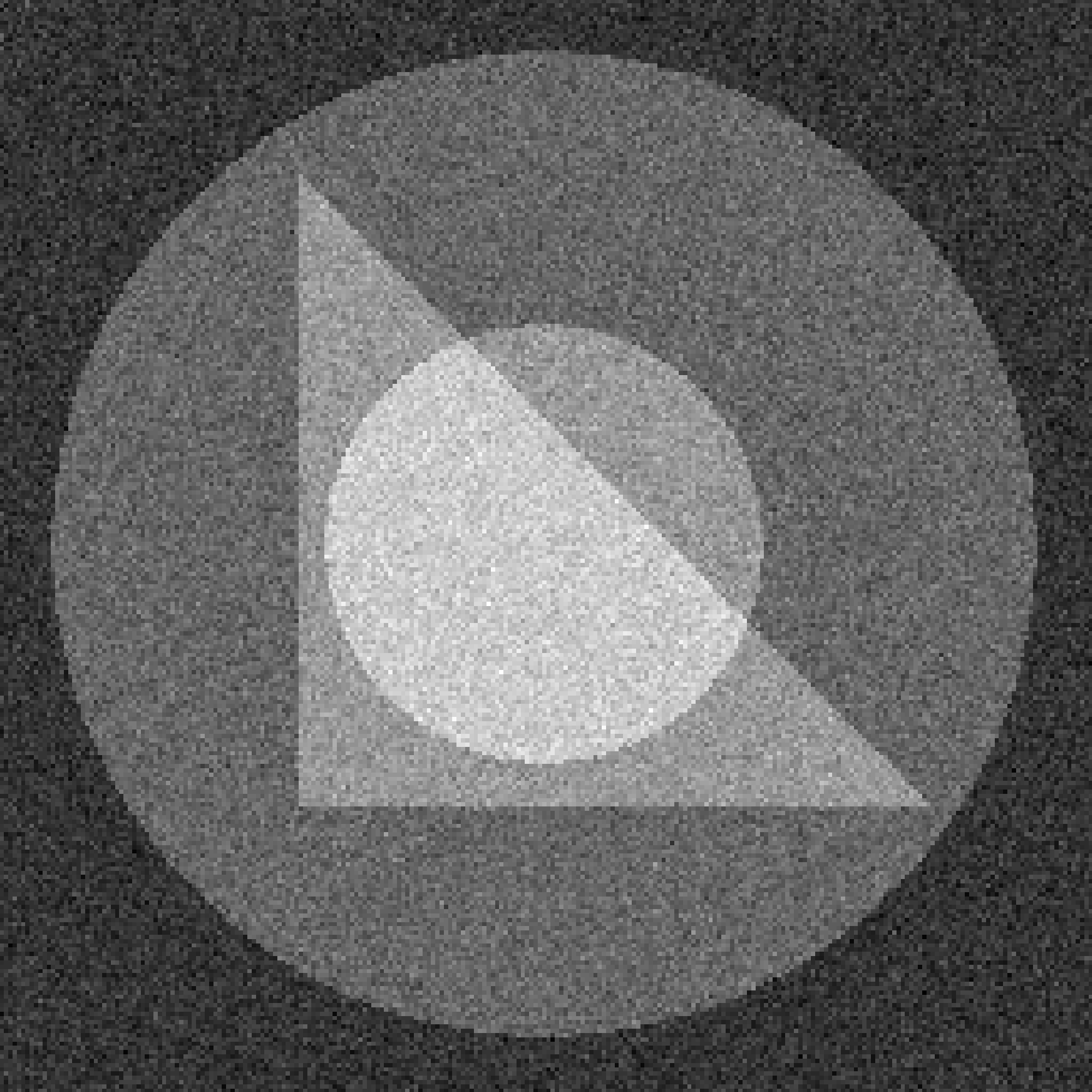}
  }
  \\
  \subfigure[Difference between original and segmented image using TV regularized segmentation, $\gamma = 1$, $\lambda = \frac{1}{6}$, 100 iterations]
  {\label{fig:cartoon_noise_01_TV}
    \includegraphics[width=0.38\textwidth]{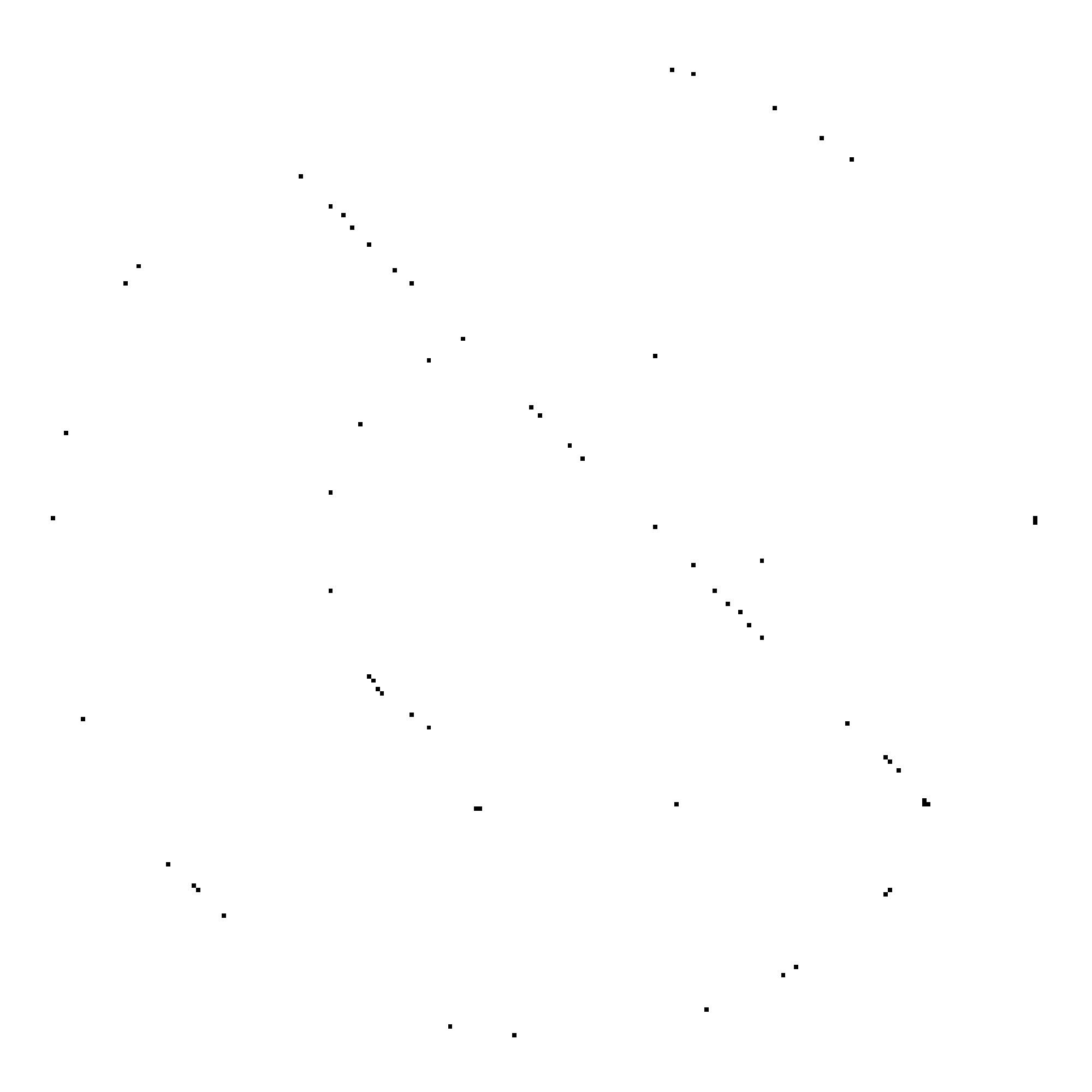}
  }
  \hspace{1cm}
  \subfigure[Difference between original and segmented image using shearlet regularized segmentation, $\gamma = 1$, $\lambda = \frac{1}{20}(0,0.1,0.2,2.20)$, 50 iterations]
  {\label{fig:cartoon_noise_01_shearlet}
    \includegraphics[width=0.38\textwidth]{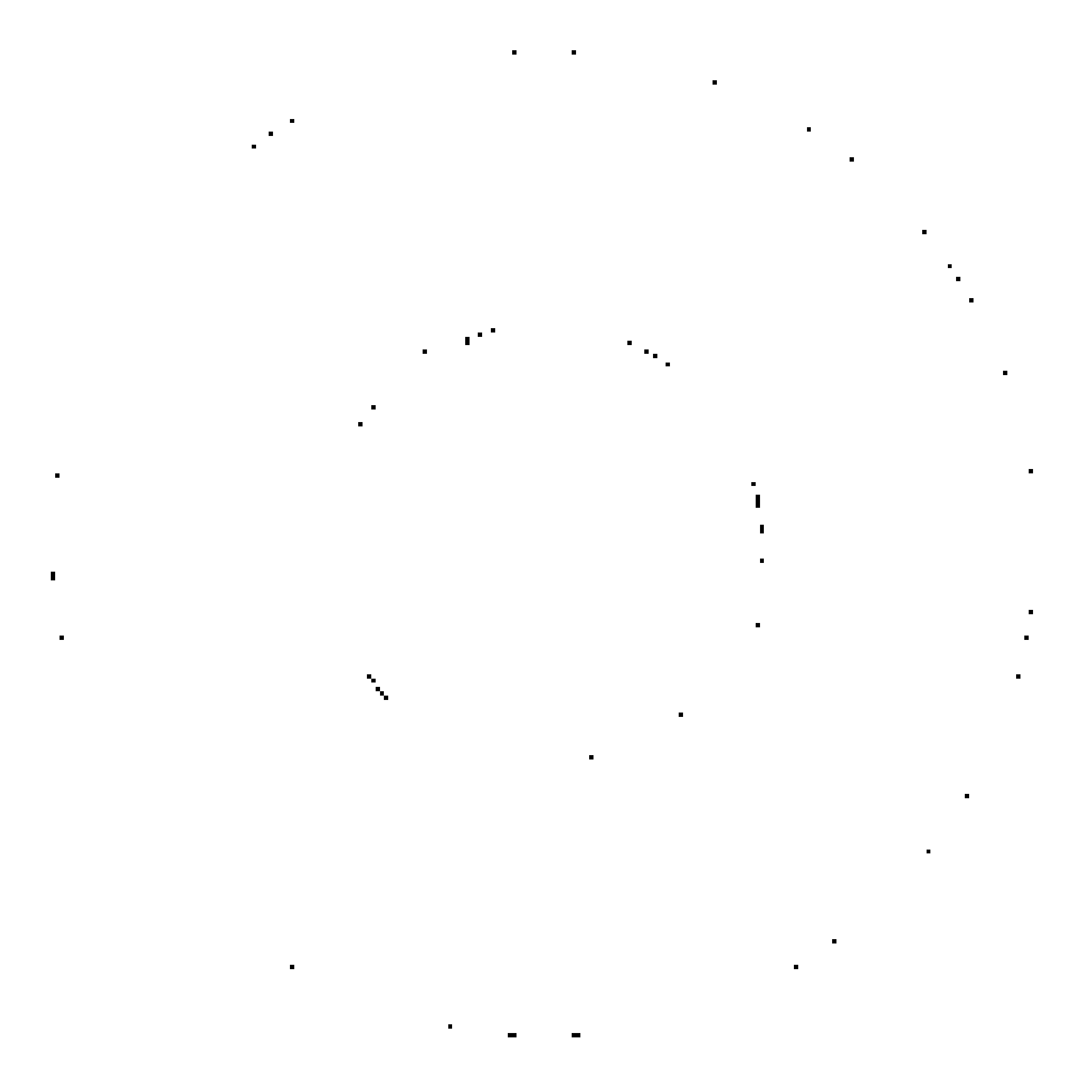}
  }
  \caption{Four-class of a cartoon image}
  \label{fig:cartoon}
\end{figure}

To illustrate the good performance of the shearlet regularizer for curved structures compared to the TV regularizer
we provide another example using the april calender sheet of www.mathe-kalender.de, i.e.,
 a snippet from the right bottom corner (see Fig. \ref{fig:stripes_originial}).
For the segmentation we added again white Gaussian noise of standard deviation $\sigma = 0.1$ in Fig. \ref{fig:stripes_noise}.
The six different reference colors for the codebook were chose manually.
In Fig. \ref{fig:stripes_noise_TV} we show the result using the TV regularizer and in Fig. \ref{fig:stripes_noise_shearlet} we used the shearlet regularizer.
Our shearlet method preserves the stripes better than the first method.
Adding more noise the results using TV become worse whereas the results using shearlets remain similar.

\begin{figure}[htbp]
  \centering
  \subfigure[Original image with colored stripes]
  {\label{fig:stripes_originial}
    \includegraphics[width=0.38\textwidth]{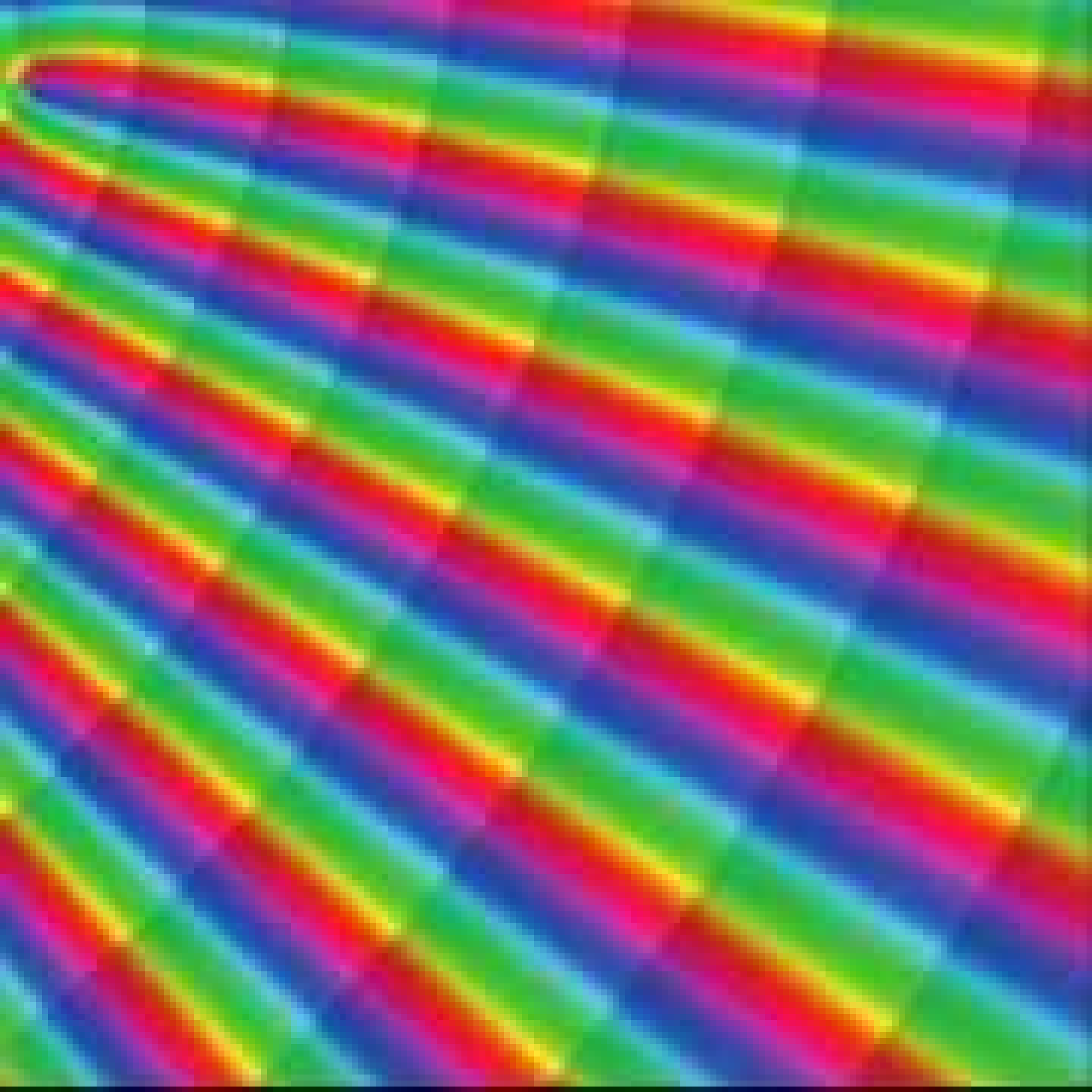}
  }
  \hspace{1cm}
  \subfigure[Image with colored stripes, noise $\sigma = 10\%$]
  {\label{fig:stripes_noise}
    \includegraphics[width=0.38\textwidth]{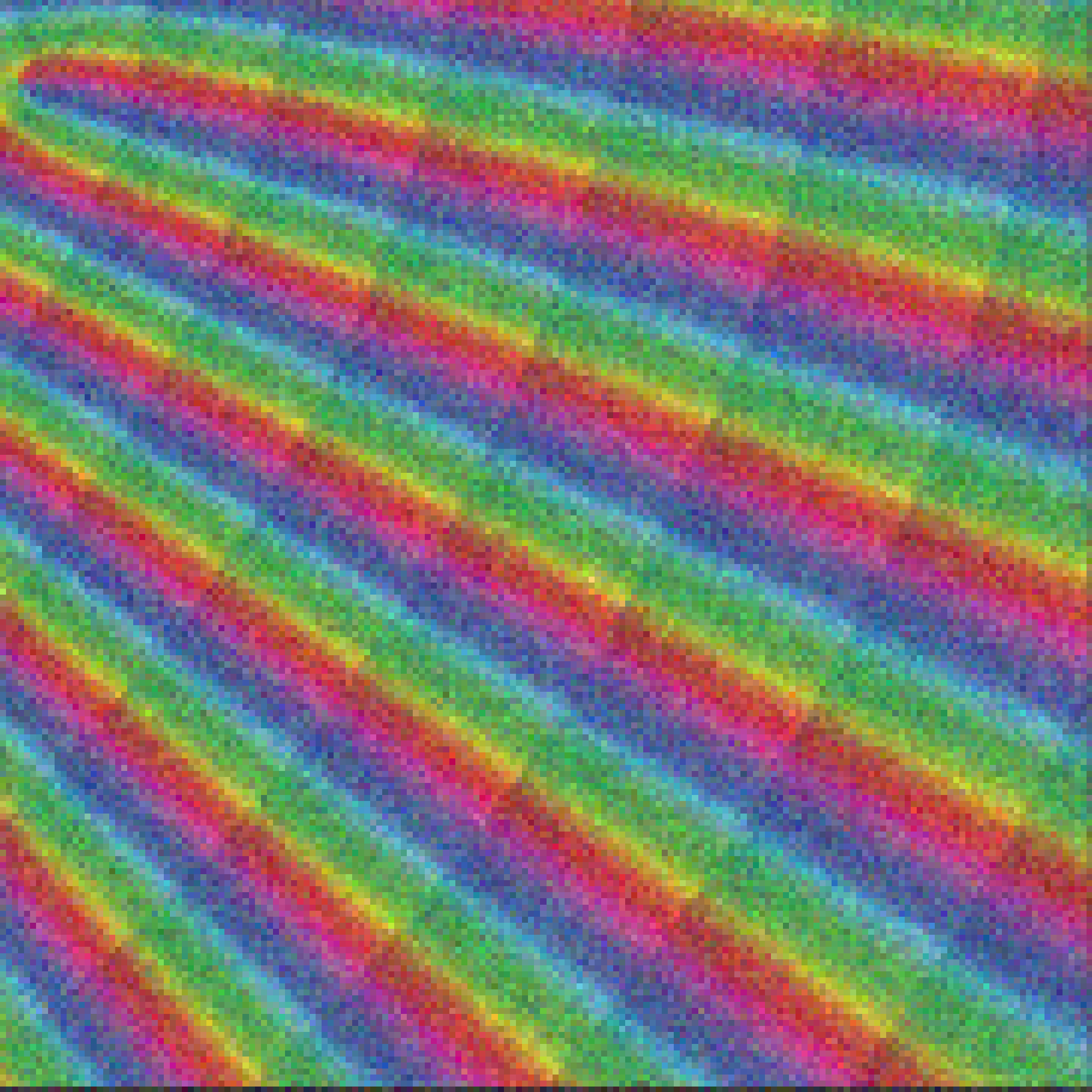}
  }
  \\
  \subfigure[Segmented image using TV regularizer,  $\gamma = 1$, $\lambda = \frac{1}{12}$, 100 iterations]
  {\label{fig:stripes_noise_TV}
    \includegraphics[width=0.38\textwidth]{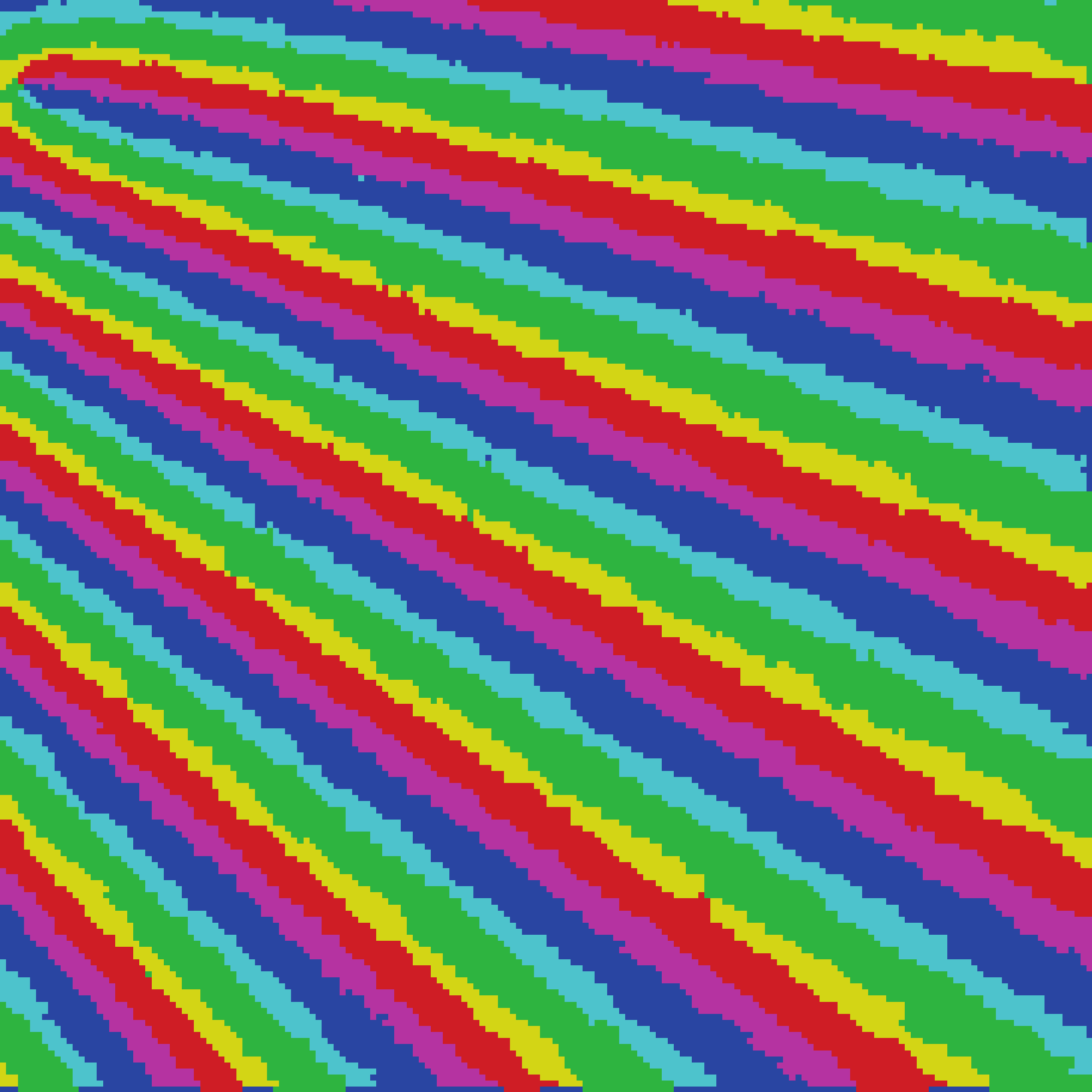}
  }
  \hspace{1cm}
  \subfigure[Segmented image using shearlet regularizer, $\gamma = 1$, $\lambda = \frac{1}{20}(0,0.1,0.2,0.4)$, 50 iterations]
  {\label{fig:stripes_noise_shearlet}
    \includegraphics[width=0.38\textwidth]{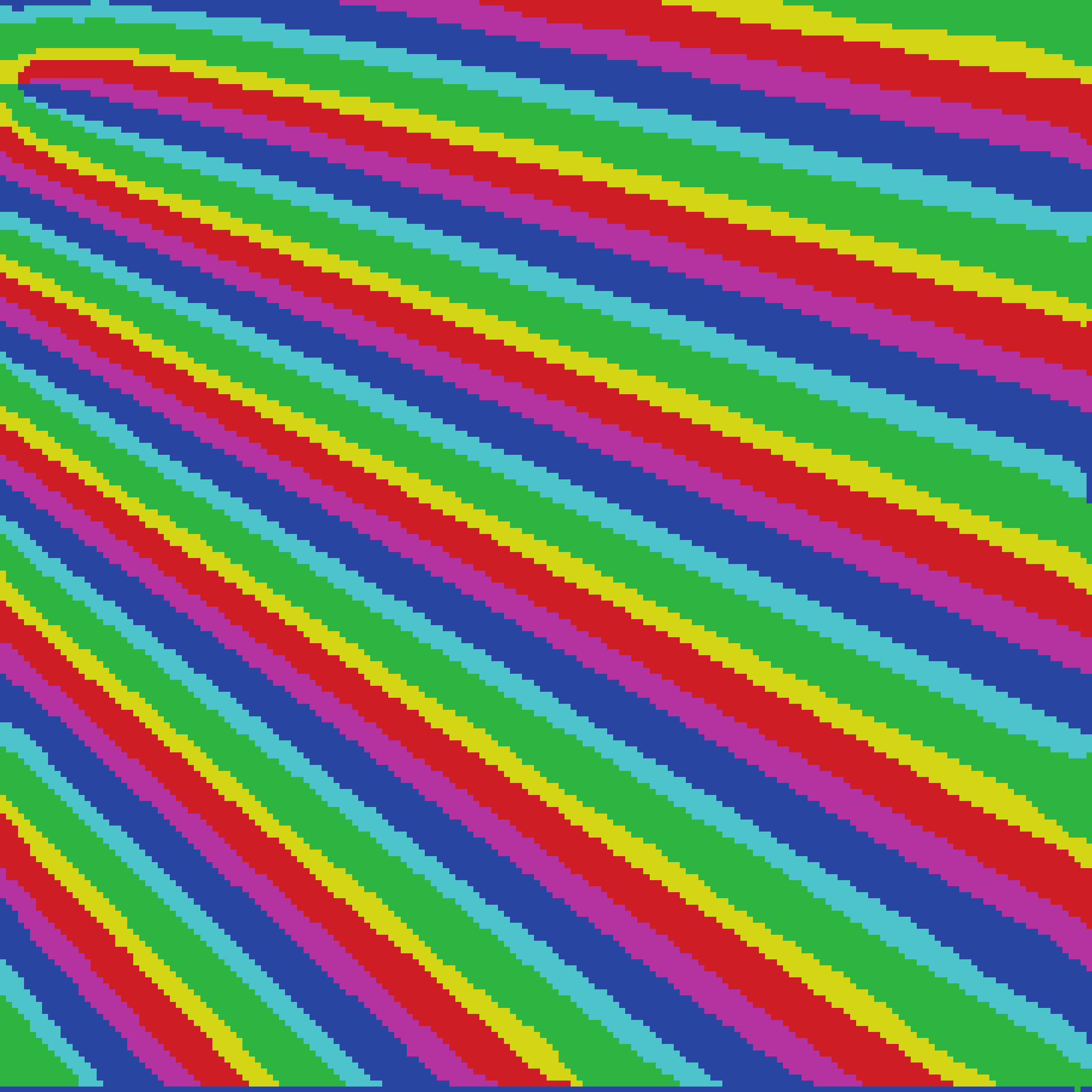}
  }
  \caption{Segmentation of stripes}
  \label{fig:stripes}
\end{figure}

Last we want to segment the real image in Fig. \ref{fig:fish}, where \ref{fig:fishOrig} is the original RGB image
of a clown-surgeon fish
and in \ref{fig:fishNoisy} white Gaussian noise of standard deviation $0.2$ was added.
As codebook we use the matrix
\begin{equation*}
  c
=
  \begin{pmatrix}
    0.7451 &   0.1843 &   0.3686 &   0.8353 \\
    0.8314 &   0.2784 &   0.5569 &   0.7333 \\
    0.8196 &   0.2275 &   0.6353 &   0.3020
  \end{pmatrix}
\end{equation*}
where each row represents a color channel and each column stands for one label.
The colors were chosen manually according to the different regions of the image.
Compared to the segmentation model above for gray valued images only the computation of $s$ in \eqref{eq:computationOfs}
has to be slightly modified. Assuming that every pixel in the image $f$ consists of a three element vector
$(f_R,f_G,f_B)$ we compute $s$ as the norm of the difference between the vectors $(f_R,f_G,f_B)$ and $(c_R,c_G,c_B)$ - in formulas:
\begin{equation*}
	s[r+(k-1)N^2]
=
  \left\|
  \begin{pmatrix} f_R[r] \\ f_G[r] \\ f_B[r] \end{pmatrix}
  -
  \begin{pmatrix} c_R[k] \\ c_G[k] \\ c_B[k] \end{pmatrix}
  \right\|_p^p
\quad\text{for }
  1\leq r\leq N^2,\
  1\leq k\leq q.
\end{equation*}

\begin{figure}[htbp]
	\centering
	\subfigure[Original image]{\label{fig:fishOrig}
\includegraphics[width=0.45\textwidth]{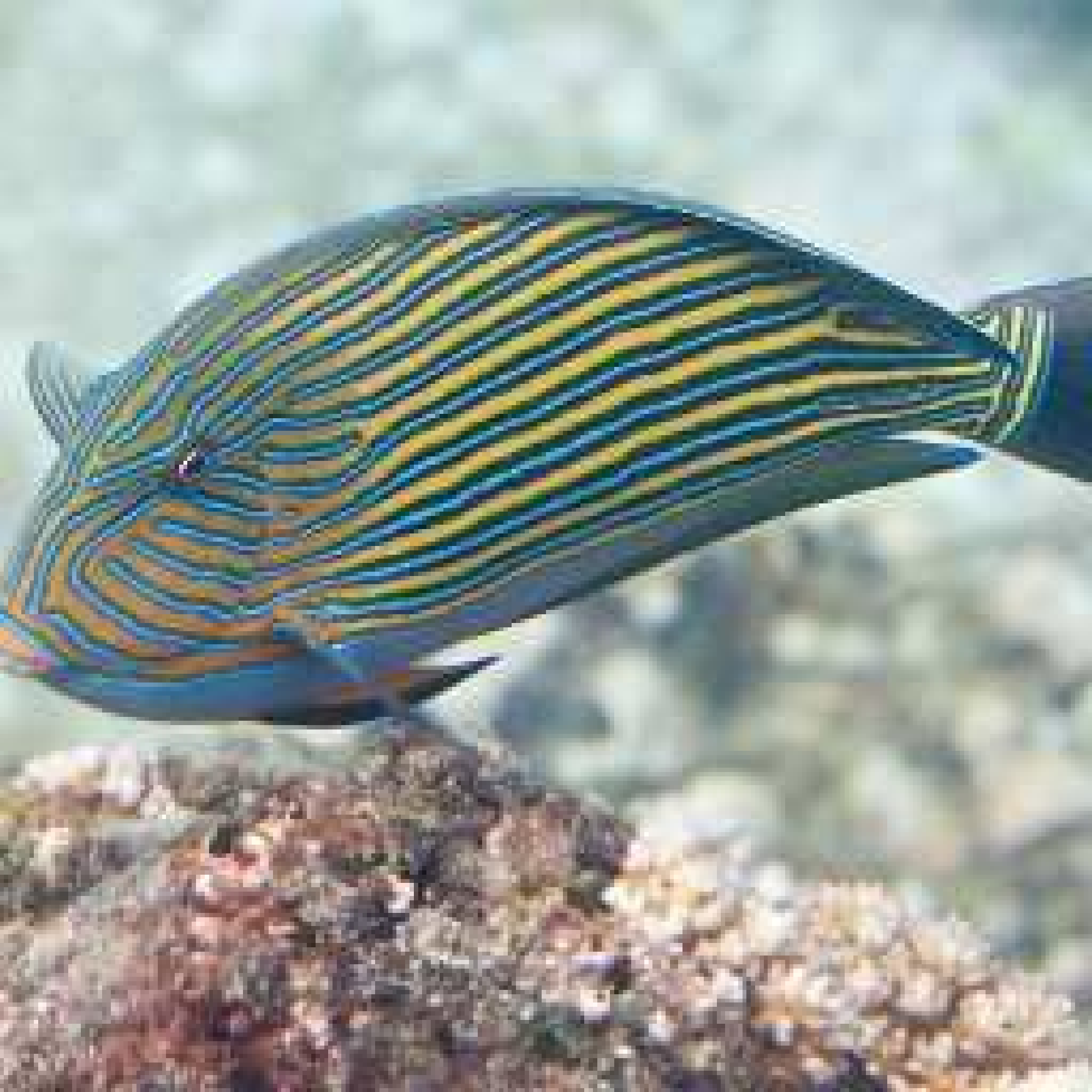}}
	\hspace{1cm}
	\subfigure[Noisy image]{\label{fig:fishNoisy}
\includegraphics[width=0.45\textwidth]{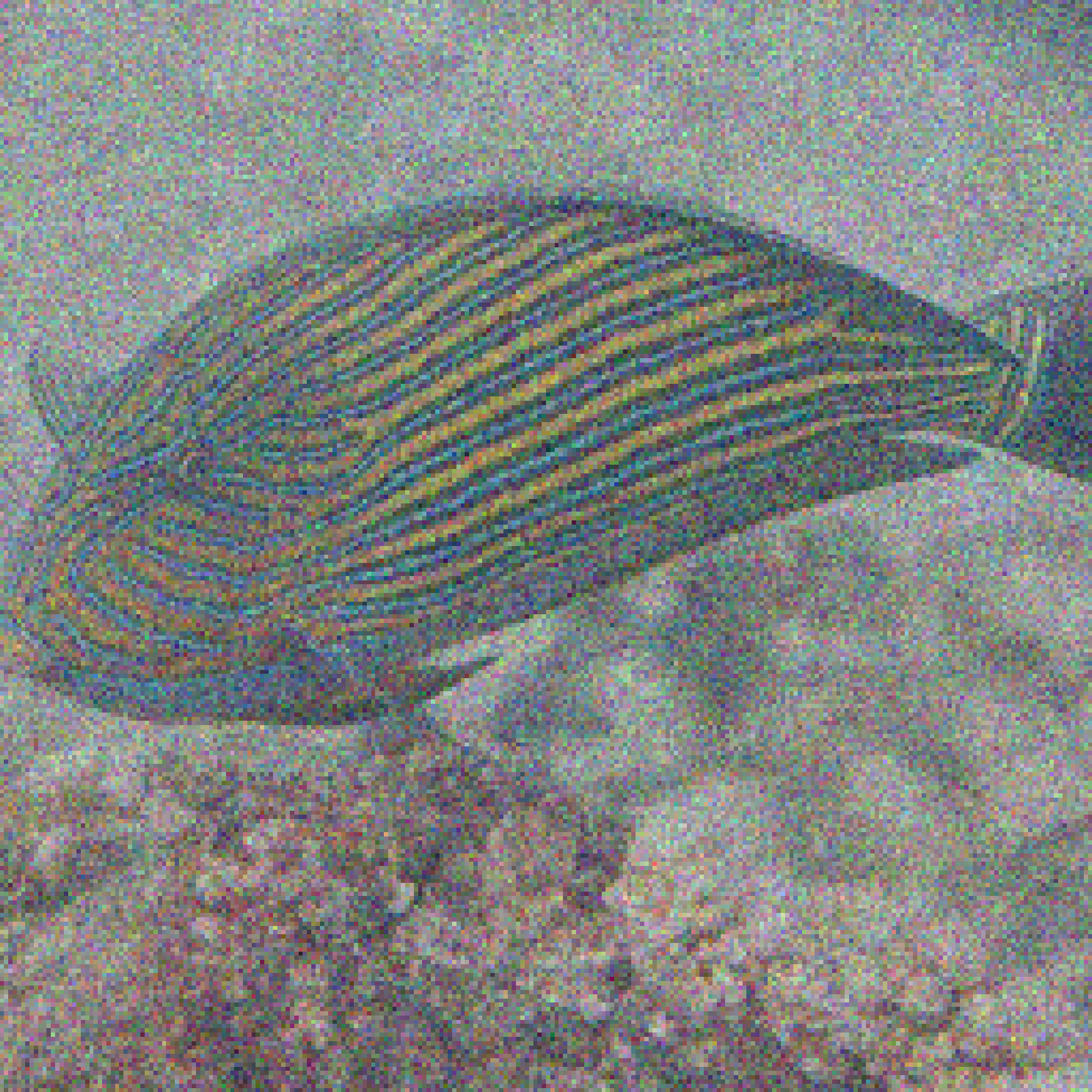}}
	\caption{Original RGB image of a clown-surgeon fish and its noisy version with white Gaussian noise of standard deviation $0.2$.}
	\label{fig:fish}
\end{figure}
In Fig. \ref{fig:fishSegmentation} we compare the segmentation results for both images
when applying different regularizers in the minimization functional.

The first row (Fig. \ref{fig:fishTV} and \ref{fig:fishNoisyTV})
was segmented using the TV regularizer.
The second row (Fig. \ref{fig:fishNL} and \ref{fig:fishNoisyNL})
is obtained with the NL regularizer.
The last row (Fig. \ref{fig:fishShearlet} and \ref{fig:fishNoisyShearlet}) shows our shearlet regularization.
The parameter were chosen such that we get visually the best result.
For the original image the results are rather similar with slight difficulties for the TV-model to
segment the curves in the front of the fish.
For the noisy image the results for the TV-based method get worse.
As expected for different parameters either noise and structure is preserved
or the noise vanishes in the segmentation process but we loose some structure, too.
However, the TV method is the fastest one.
The NL-means regularizer does a good job, but requires more computational time than
our shearlet method and tends to preserve tiny structures.
Our shearlet method segments this kind of curved structure very well.

\begin{figure}[htbp]
    \centering
	\subfigure[TV segmentation\newline $\gamma = 1$, $\lambda = \frac{1}{12}$, 100 iterations]
	{\label{fig:fishTV}
	  \includegraphics[width=0.38\textwidth]{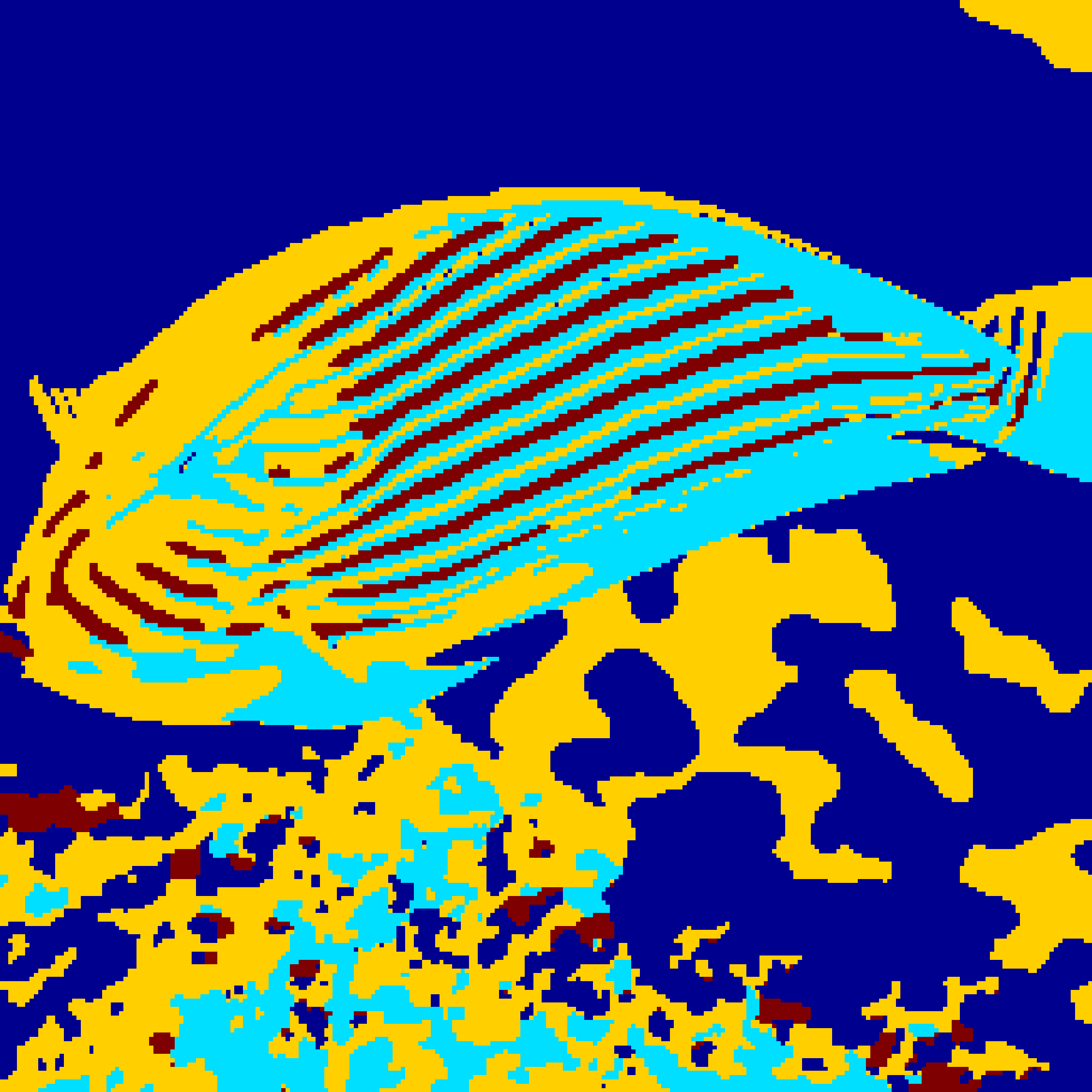}
	}
	\hspace{1cm}
	\subfigure[TV segmentation, noisy image,\newline $\gamma = 1$, $\lambda = \frac{1}{6}$, 100 iterations]
	{\label{fig:fishNoisyTV}
	  \includegraphics[width=0.38\textwidth]{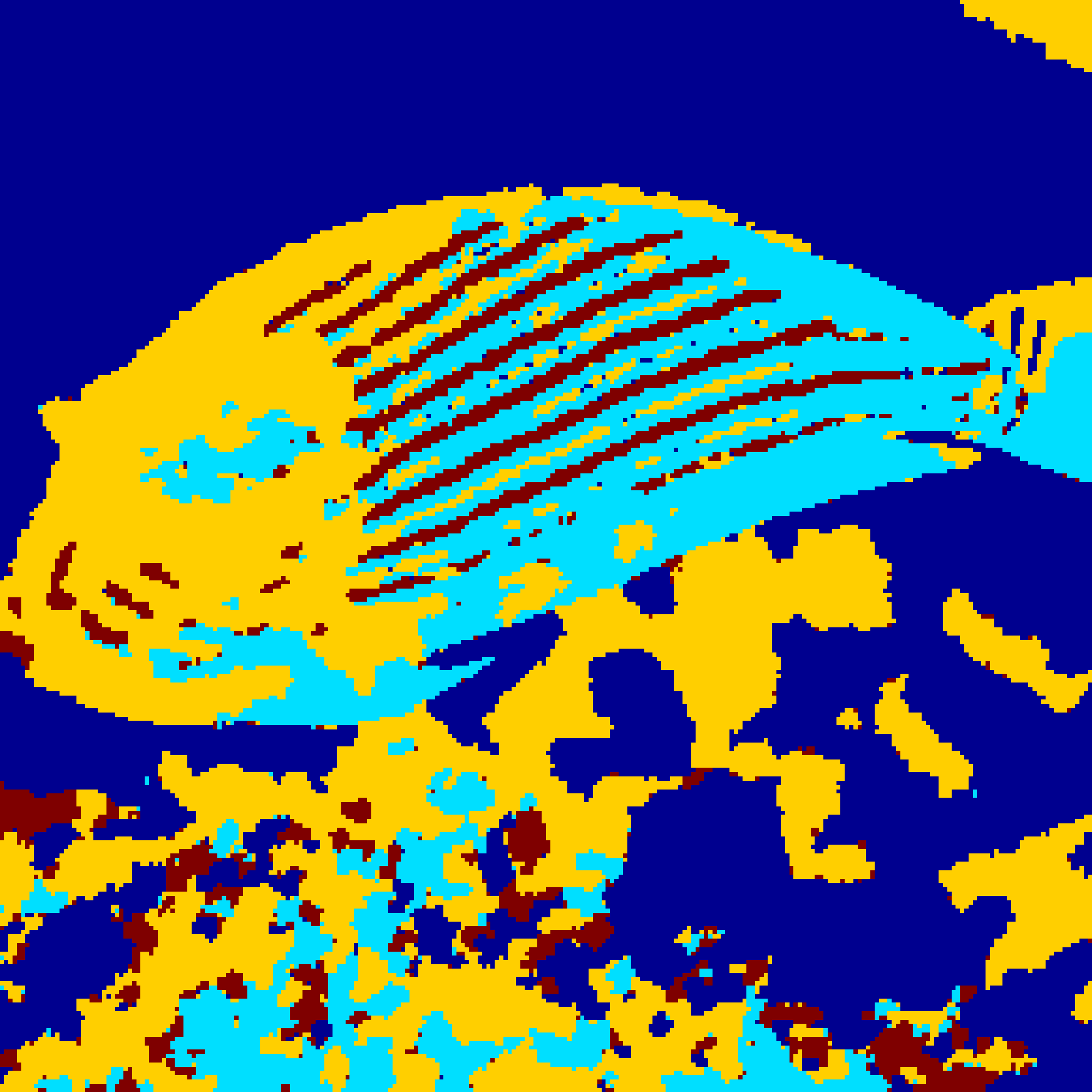}
	}
	\\
    \subfigure[NL-means segmentation\newline $\gamma = \frac{1}{5}$, $\lambda = \frac{1}{32}$, 100 iterations]
	{\label{fig:fishNL}
	  \includegraphics[width=0.38\textwidth]{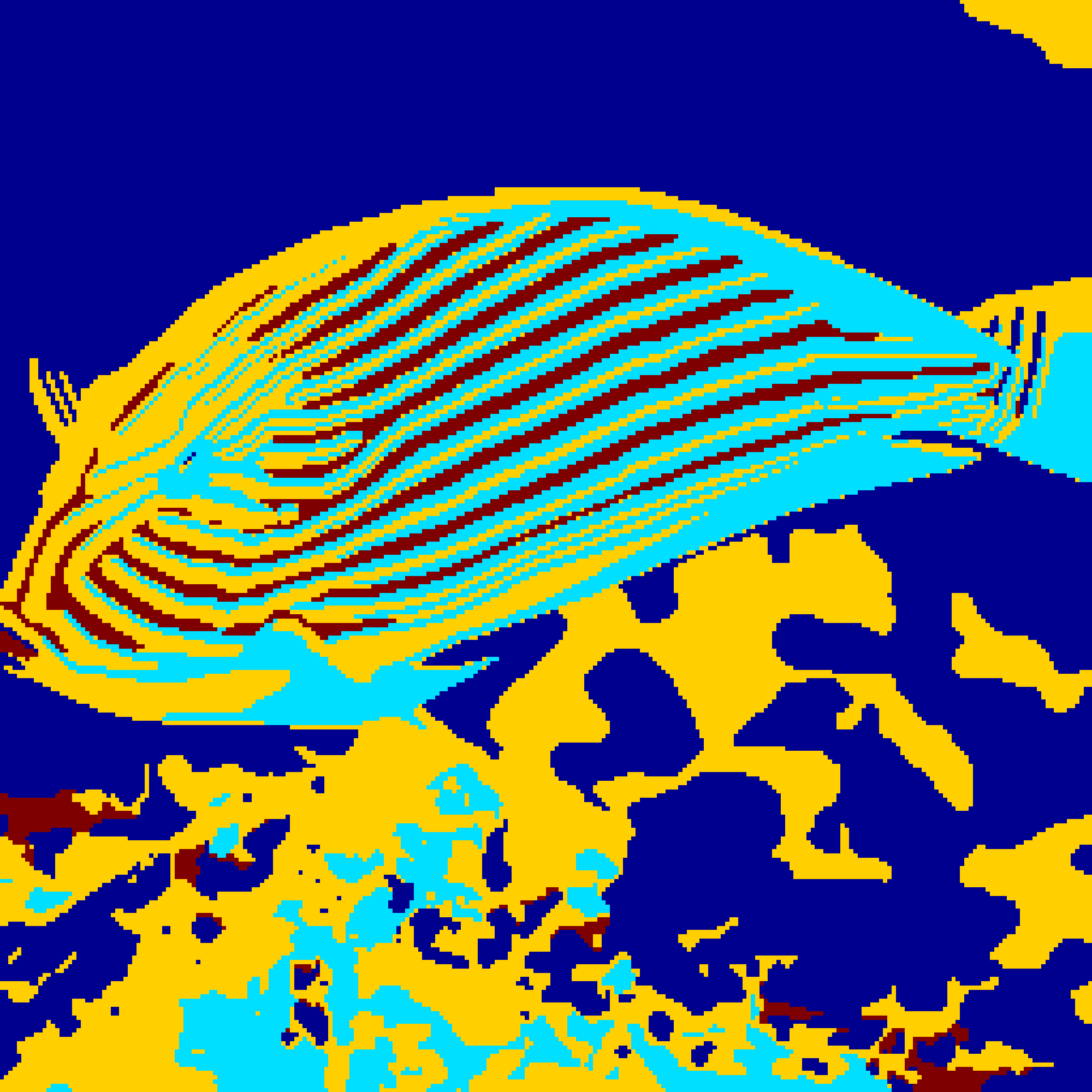}
	}
	\hspace{1cm}
    \subfigure[NL-means segmentation, noisy image\newline $\gamma = \frac{1}{5}$, $\lambda = \frac{1}{128}$, 100 iterations]
	{\label{fig:fishNoisyNL}
	  \includegraphics[width=0.38\textwidth]{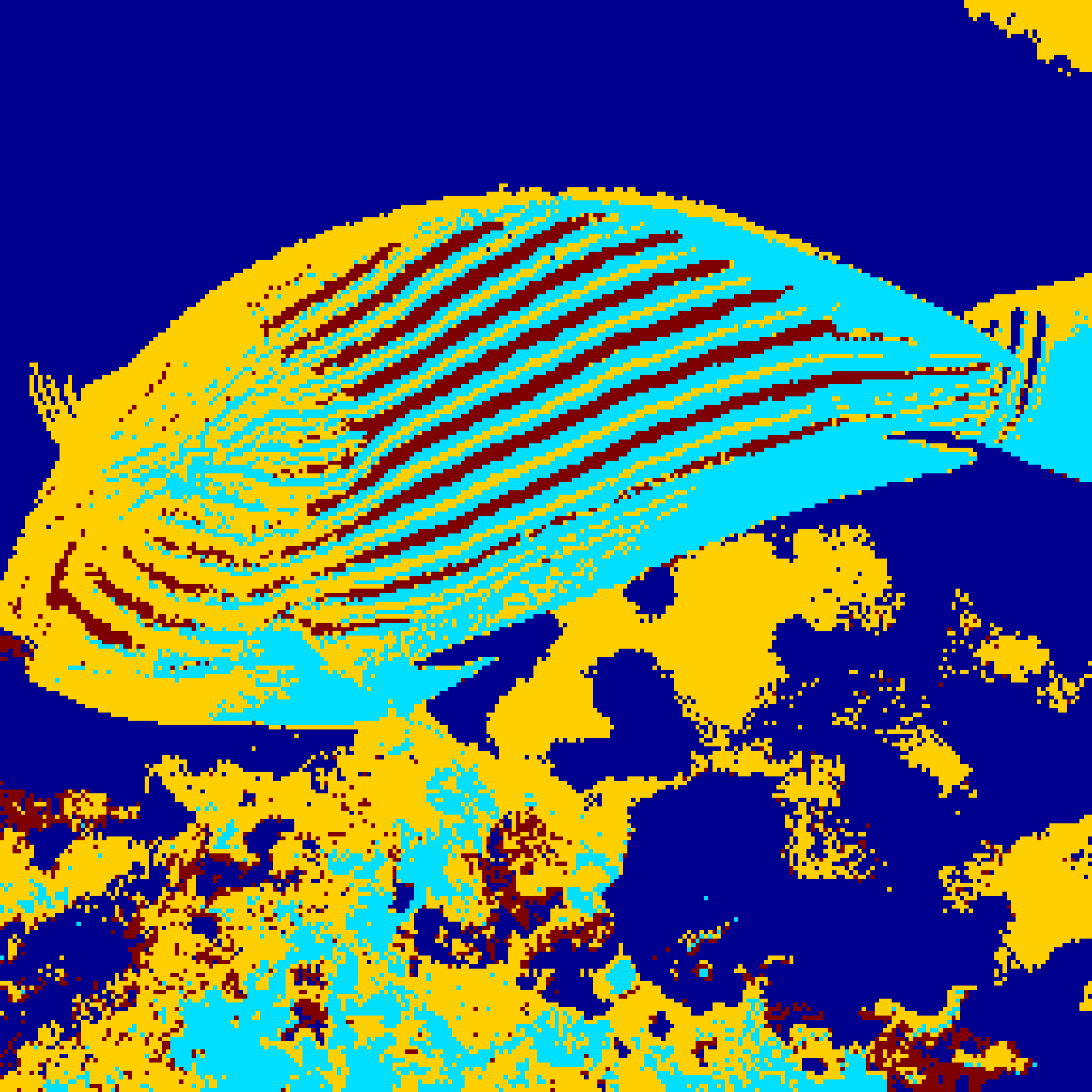}
	}
	\\
	\subfigure[Shearlet segmentation\newline $\gamma = 4$, $\lambda = \frac{1}{250}(0,0.1,0.2,0.4,0.8)$,\newline 30 iterations]
	{\label{fig:fishShearlet}
	  \includegraphics[width=0.38\textwidth]{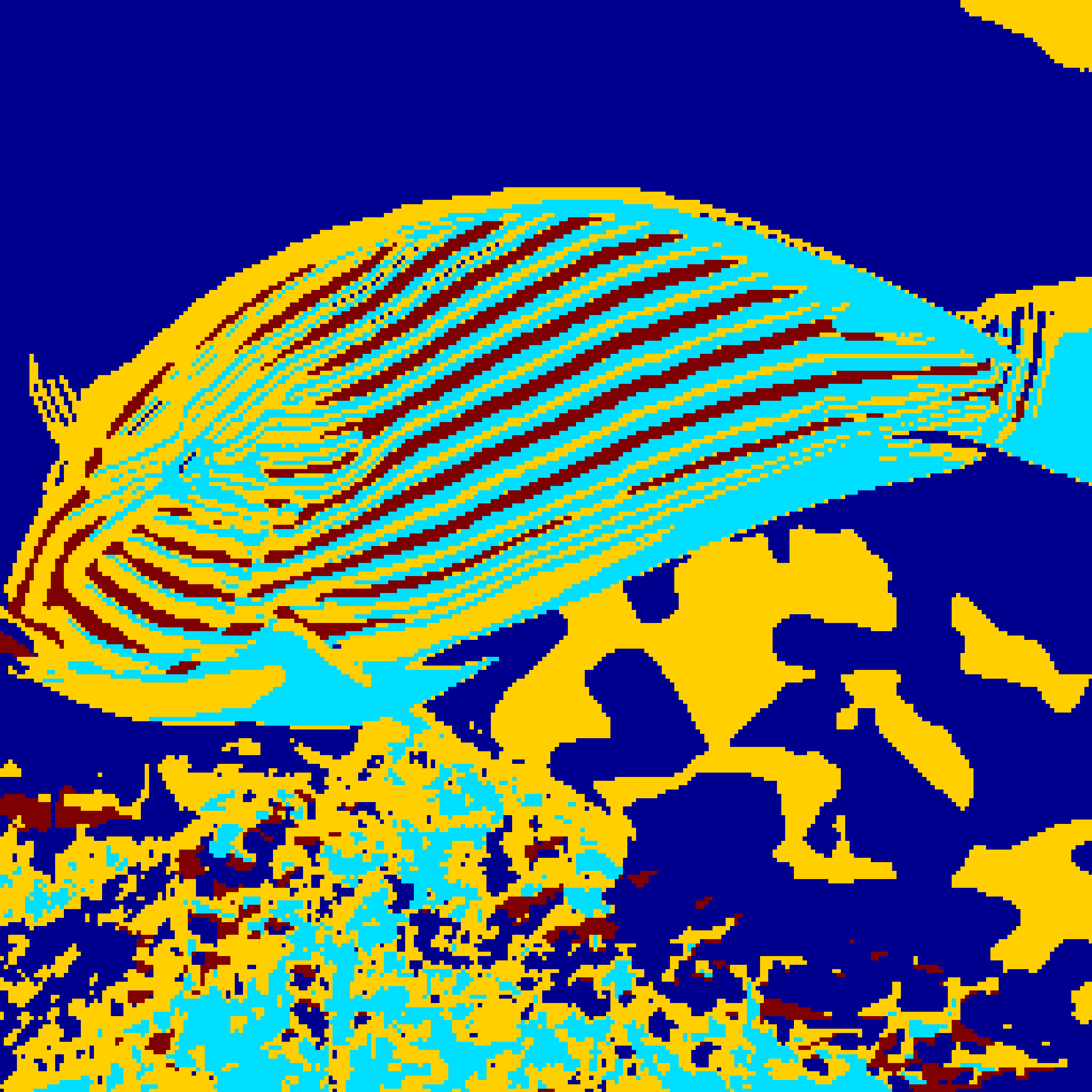}
	}
	\hspace{1cm}
	\subfigure[Shearlet segmentation, noisy image, \newline $\gamma = 4$, $\lambda = \frac{1}{100}(0,0.1,0.2,0.4,2)$,\newline 30 iterations]
	{\label{fig:fishNoisyShearlet}
	  \includegraphics[width=0.38\textwidth]{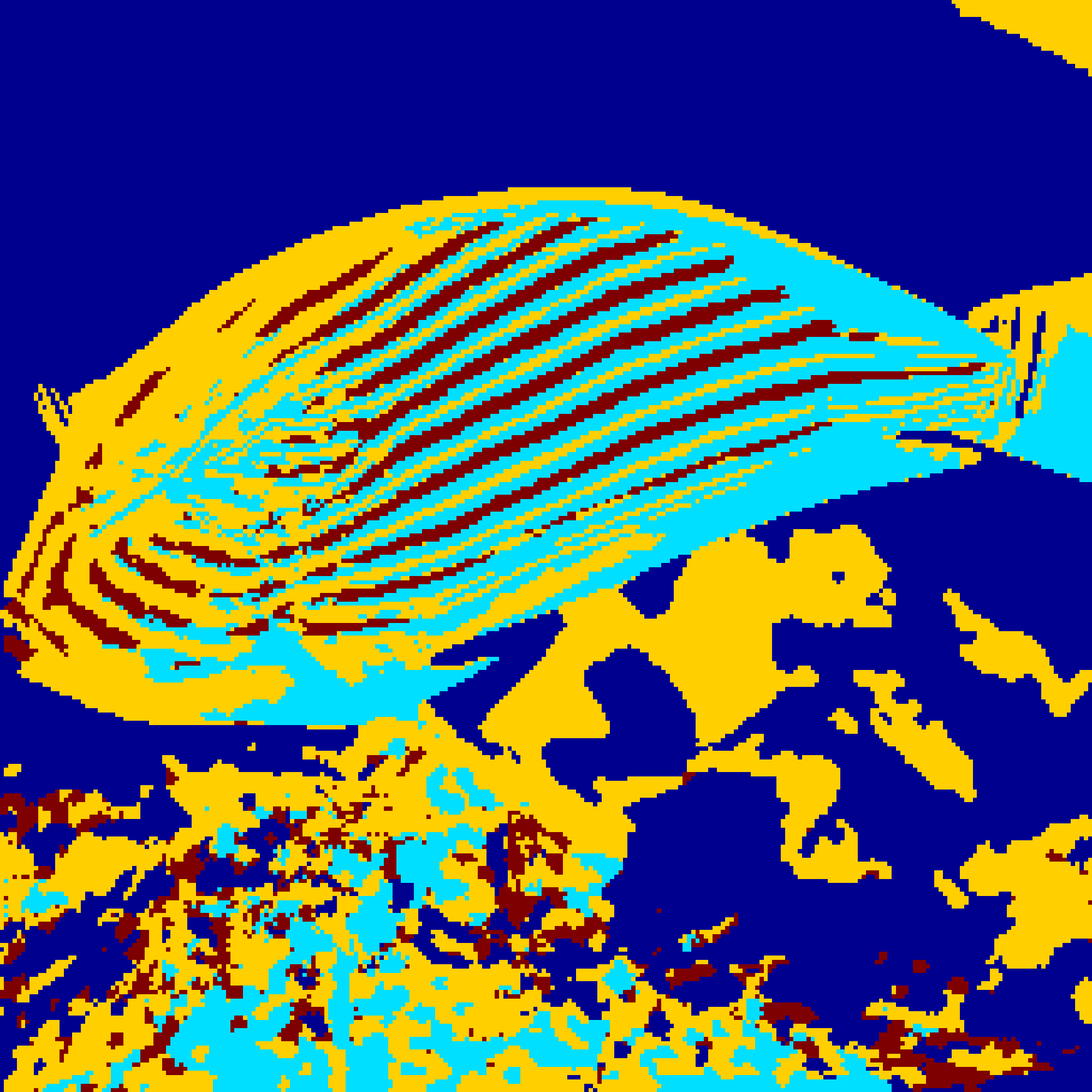}
	}
	\caption{Four-class segmentation of a clown-surgeon fish image}
	\label{fig:fishSegmentation}
\end{figure}
In summary we have seen that shearlets can be successfully applied for the segmentation of curved textures
in conjunction with a convex multi-label model and the ADMM algorithm.

\end{document}